\def\interleave{|\kern-.25ex|\kern-.25ex|}
\def\interleavesub{|\kern-.15ex|\kern-.15ex|}
\newcommand{\nNorm}[1]{\left|\kern-.25ex\left|\kern-.25ex\left| {#1}\right|\kern-.25ex\right|\kern-.25ex\right|}
\newcommand{\E}{{\mathbb E}}
\newcommand{\eps}{{\epsilon}}
\newcommand{\R}{{\mathbb R}}
\renewcommand{\P}{{\mathbb P}}
\newcommand{\A}{{\mathcal{A}}}
\newcommand{\M}{{\mathcal{M}}}
\newcommand{\I}{{\mathcal{I}}}
\newcommand{\T}{{\mathcal{T}}}
\newcommand{\G}{{\mathcal{G}}}
\newcommand{\F}{{\cal F}}
\def\min{\mathop{\text{\rm min}}}
\def\max{\mathop{\text{\rm max}}}
\def\inf{\mathop{\text{\rm inf}}}
\def\sup{\mathop{\text{\rm sup}}}
\numberwithin{equation}{section}
\theoremstyle{plain}
\newtheorem{theorem}{Theorem}[section]
\newtheorem{corollary}{Corollary}[section]
\newtheorem{proposition}{Proposition}[section]
\newtheorem{lemma}{Lemma}[section]
\newtheoremstyle{remark}{\topsep}{\topsep}%
     {\normalfont}
     {}           
     {\bfseries}  
     {.}          
     {.5em}       
     {\thmname{#1}\thmnumber{ #2}\thmnote{ #3}}
\theoremstyle{remark}
\newtheorem{remark}{Remark}[section]
\newtheorem{definition}{Definition}[section]
\long\def\comment#1{}
\def\P{{\mathbb P}}
\def\E{{\mathbb E}}
\def\supp{\mathop{\text{supp}\kern.2ex}}
\def\argmax{\mathop{\text{\rm arg\,max}}}
\let\hat\widehat
\let\tilde\widetilde
\let\hat\widehat
\let\tilde\widetilde
\def\1{{(1)}}
\def\2{{(2)}}
\def\cG{{\mathcal{G}}}
\def\M{{\mathcal{M}}}
\long\def\comment#1{}
\long\def\comment#1{}
\def\P{{\mathbb P}}
\def\E{{\mathbb E}}
\def\supp{\mathop{\text{supp}\kern.2ex}}
\def\argmax{\mathop{\text{\rm arg\,max}}}
\let\tilde\widetilde
\let\hat\widehat
\let\tilde\widetilde
\def\1{{(1)}}
\def\2{{(2)}}
\def\cG{{\mathcal{G}}}
\long\def\comment#1{}
\def\threebars{\mbox{$|\kern-.25ex|\kern-.25ex|$}}
\def\H{\mathcal{H}}
\def\F{\mathbb{F}}
\begin{document}

\hypersetup{citecolor=MidnightBlue}
\hypersetup{linkcolor=Black}
\hypersetup{urlcolor=MidnightBlue}

\begin{frontmatter}

\mbox{}
\vskip.25in
\title{Estimation in Tournaments and Graphs under Monotonicity Constraints}
\maketitle

\begin{aug}
\author{\fnms{Sabyasachi} \snm{Chatterjee}\ead[label=e1]{sabyasachi@galton.uchicago.edu}} \and
\author{\fnms{Sumit} \snm{Mukherjee}\ead[label=e2]{sm3949@columbia.edu}} 



\runauthor{Chatterjee, S., Mukherjee, S.}

\affiliation{University of Chicago and Columbia University}

\address{5734 S.~University Avenue \\
Chicago, IL 60637 \\
\printead{e1}
}

\address{1255 Amsterdam Avenue \\
New York, NY 10027\\
\printead{e2}
}
\end{aug}

\begin{abstract}
We consider the problem of estimating the probability matrix governing a tournament or linkage in graphs from incomplete observations, under the assumption  that the probability matrix satisfies natural monotonicity constraints after being permuted in both rows and columns by some latent permutation. 
We propose a natural estimator which bypasses the need to search over all possible latent permutations and hence is computationally tractable. We then derive asymptotic risk bounds for our estimator. Pertinently, we demonstrate an automatic adaptation property of our estimator for several sub classes of our parameter space which are of natural interest, including  generalizations of the popular Bradley Terry Model in the Tournament case, the $\beta$ model and Stochastic Block Model in the Graph case, and H\"older continuous matrices  in the tournament and graph settings. 
\end{abstract}


\vskip15pt
\end{frontmatter}

\maketitle
\section{Introduction}
In this paper we consider two statistical estimation problems. We begin by describing the two set ups.
\begin{itemize}
\item Consider the situation of $n$ teams playing in a league tournament where each team plays every other team once. The results of the tournament can be written as a data matrix $y$ of zeroes and ones by setting $y_{ij} = 1$ for $i<j$ if team $i$ wins against team $j$, and $0$ otherwise.  Let $\theta_{ij}$ be the probability that team $i$ wins against team $j$ with $\theta_{ji} = 1 - \theta_{ij}$ whenever $i \neq j.$ Set $\theta_{ii} = 0$ for all $1 \leq i \leq n$ as a matter of convention. The upper triangular part of the data matrix $y$ is modeled as 
\begin{equation}
y_{ij} \sim Bern(\theta_{ij}), \:\:\forall 1 \leq i \le j \leq n
\end{equation}
where $y_{ij}$ in the upper triangular part is jointly independent and $Bern(.)$ refers to the standard Bernoulli distribution.

The lower triangular part of the data matrix is filled in a skewsymmetric manner; that is
\begin{equation}
y_{ij} = 1 - y_{ji},   \:\:\forall 1 \leq j < i \leq n.
\end{equation}
The problem then is to estimate the pairwise comparison probability matrix $\theta$ based on the observed data matrix $y.$ This setting can arise whenever the data is in the form of pairwise comparisons (see~\cite{david1963method}), for example in analyzing customer preferences for items, citation patterns for journals (see~\cite{stigler1994citation}). For convenience, we stick to the tournament terminology in this paper. Note that we have $O(n^2)$ parameters to estimate and $O(n^2)$ data points. Hence one needs structural assumptions on $\theta$ for consistent estimation to be possible. The classical approach in this problem is to assume that the pairwise probability matrix has the following structural form:
\begin{equation}\label{bradleyterry}
\theta_{ij} = \frac{\exp(w_i)}{\exp(w_i) + \exp(w_j)}.
\end{equation}
where the $w = (w_1,\dots,w_n)$ vector is a vector of weights representing the skill/ability of the teams. This is the Bradley-Terry model (see~\cite{bradley1952rank}) which is very popular in the ranking literature. It is then common to estimate $w$ by maximum likelihood (see~\cite{hunter2004mm}) and plug it in to estimate $\theta_{ij}.$ The study of asymptotic estimation of $w$ in the Bradley Terry Model  has a long history in Statistics (see~\cite{simons1999asymptotics} and references therein).

We are interested in the problem of estimating the matrix of probabilities $\theta_{ij}$ under an assumption commonly made in the ranking literature known as Strong Stochastic Transitivity (\textbf{SST}) (see~\cite{shah2016stochastically} and references therein). This assumption posits the existence of an ordering among the teams which is unknown to the statistician. This ordering is then reflected on the probabilities $\theta_{ij}$ as follows. Let team $j$ have a higher rank than team $k$  (i.e. team $j$ is better than team $k$). Then for any team $i$, the probability of team $i$ defeating team $k$ would be no less than the probability of team $i$ defeating team $j$, which gives $\theta_{ij} \leq \theta_{ik}.$

Even though the SST condition is classical, a formal study of estimation under this condition was done recently in \cite{ChaUSVT15} and was termed as the \textit{Nonparametric Bradley Terry Model}. The terminology is apt because it clearly generalizes the very commonly used Bradley Terry model. Any matrix $\theta$ of the Bradley Terry form in~\eqref{bradleyterry} satisfies the SST condition with the ordering given by the ordering of the $w$ vector. We refer to Proposition $1$ in~\cite{shah2016stochastically} who show in a precise sense that the Nonparametric Bradley Terry Model is a significant extension of the usual Bradley Terry model.

In many realistic scenarios, we would not be able to observe all pairwise comparisons. Thus, it is of interest whether one can still estimate the pairwise comparison matrix $\theta$ in the situation where we observe only a fraction of all possible games that can be observed. In this paper we consider the missing data at random setting. In this setting we get to observe each entry above the diagonal with probability $p$ independently of other observations above the diagonal.

The purpose of this paper is to propose and analyze a computationally tractable estimator in this problem. The main focus of this paper is to obtain finite sample risk bounds (upto a constant factor) and study how small can $p$ be to still allow consistent estimation. 
\\

\item Consider now the situation of observing a random graph on $n$ nodes with no self loops. Let $\theta_{ij}$ now be the probability of node $i$ and node $j$ being linked. Again we set $\theta_{ii} = 0$ for all $1 \leq i \leq n$ as a matter of convention. The random graph can be now encoded as an adjacency matrix $y$ of zeroes and ones. Again, the upper triangular part of the adjacency matrix is modelled as
\begin{equation}\label{graph}
y_{ij} \sim Bern(\theta_{ij})\:\:\forall\:\:1 \leq i \leq j \leq n
\end{equation}
where $y_{ij}$ in the upper triangular part is jointly independent. The lower triangular part of the data matrix is now filled in a symmetric manner; that is
\begin{equation}
y_{ij} = y_{ji}\:\:\forall\:\:1 \leq j < i \leq n.
\end{equation}
Inspired by the SST assumption in the ranking literature, here we assume that that the vertices can be arranged in an order (unknown to the statistician) of increasing tendency of getting linked to other vertices. This assumption will again impose monotonicity constraints on the edge probabilities $\theta_{ij}.$ For example if node $j$ is more "active" or "popular" than node $k$ then for any node $i$ we must have $\theta_{ik} \leq \theta_{ij}.$ For an example where such an assumption seems natural, consider a social network with $n$ people labeled $\{1,2,\cdots,n\}$ where the $i^{th}$ person has a popularity parameter $p_i\in [0,1]$. The chance that person $i$ and person $j$ are friends is  $f(p_i,p_j)$, where $f$ is increasing in both co-ordinates to signify that increasing popularity leads to more friendship ties. The function $f$ also needs to be symmetric, as the chance that $i$ and $j$ are friends is symmetric in $(i,j)$. Indeed, in this case there is (at least) one ordering which sorts the nodes of the network in increasing order of popularity.

We pose and study the problem of estimating the edge probability matrix $\theta$ in this set up  with missing data at random. We sometimes refer to this model of random graphs as the symmetric model, differentiating it from the skewsymmetric (tournament) case. Under our model assumptions, the problem of estimating the edge probabilities is very closely related to the problem of estimating graphons in the spirit of~\cite{gao2015rate} where we assume monotonicity (without smoothness) of the graphon in both variables, instead of smoothness assumptions made in~\cite{gao2015rate}. 
\end{itemize}

In this paper we look at the above two estimation problems in the skewsymmetric and the symmetric model in a  unified way. In particular, we introduce and study the risk properties of a natural estimator which is described in subsection~\ref{algo}. Our estimator has the same form in both the models, and the technique of analyzing the risk properties of the estimator in both the models is the same.

\subsection{Formal Setup of our problem}\label{formal}
In this subsection we define two parameter spaces; one for the skewsymmetric model and one for the symmetric model. 

Denote $S_n$ to be the set of all permutations on $n$ symbols. For any $n \times n$ matrix $\theta$ and any permutation $\pi \in S_n$ we define $\theta \circ \pi$ to be the $n \times n$ matrix such that $(\theta \circ \pi)_{ij} = \theta_{\pi(i),\pi(j)}.$ If $\Pi$ denotes the $n \times n$ permutation matrix corresponding to the permutation $\pi \in S_n,$ then we have  $\theta \circ \pi = \Pi^T \theta \:\Pi,$ where the RHS is usual matrix multiplication.

Let $\mathcal{T}$ be the space of tournament matrices defined by
\begin{align}
\T :=\{\theta\in [0,1]^{n\times n}:\:\:&\theta_{ij} \leq \theta_{ik}\ \:\text{whenever}\:  i \neq k, i \neq j, k < j ;\nonumber\\&\theta_{ji}=1-\theta_{ij}\:\:\forall\:\:i \neq j; \theta_{ii} = 0\:\ \forall i;\:\theta_{ij} \leq \frac{1}{2} \:\forall i < j\}.
\end{align}
Any matrix in $\T$ when only looked at the upper triangular part above the diagonal is non increasing in any row (as $j$ grows) and non decreasing in any column (as $i$ grows). The lower triangular part is just $1$ minus the upper triangular part and the diagonals are zero. In words, $\T$ is the space of matrices which satisfy the SST assumption with known ranking where the ranking is such that player $n$ is the best, followed by player $n - 1$ and so on. Then our parameter space for the skewsymmetric model can be written as
\begin{equation}
\Theta_{\T} = \{\theta \circ \pi: \theta \in \T, \pi \in S_n\}.
\end{equation}

Similarly, define the space of matrices 
\begin{align}
\G =\{\theta\in \R^{n\times n}:\theta_{ij} \le \theta_{ik}\ \text{whenever}\:  i \neq k, i \neq j, j < k ;\quad \theta_{ji}=\theta_{ij}\:\:\forall i \ne j;\quad \theta_{ii}=0\ \forall i\}.
\end{align}
Any matrix in $\G$ when only looked at the upper triangular part above the diagonal is non decreasing in both rows and columns. The lower triangular part is symmetrically filled, and the diagonals are zero. Again, $\G$ is the space of expected adjacency matrices which are consistent with the monotonicity restrictions imposed by the ordering where node $n$ is most popular followed by node $n - 1$ and so on. Then our parameter space for the symmetric model can be written as
\begin{equation}
\Theta_{\G} = \{\theta \circ \pi: \theta \in {\G}, \pi \in S_n\}.
\end{equation}

For $\Theta=\Theta_{\T}$ or $\Theta_{\G}$ we study the problem of estimating the underlying matrix of probabilities $\theta$. The loss function we consider is the mean Frobenius squared metric defined for any two matrices $\theta$ and $\tilde{\theta}$ as 
$
\frac{1}{n^2}\|\tilde{\theta}-\theta\|^2$ , where $\|A\|$ denotes the Frobenius norm of the matrix $A.$

\subsection{The estimator}\label{algo}
The purpose of this paper is a statistical study of the following estimator, denoted by $\hat{\theta},$ of the true underlying mean matrix $\theta^* \circ \pi^*$ where $\theta^* \in \mathcal{T}/\cG$, 
and $\pi^* \in S_n$ where $S_n$ is the space of permutations on $n$ symbols. Construction of our estimator consists of several steps. Initially our data matrix $y$ will have some entries $1$ or $0$ and some entries would be missing. Let $\hat{p}$ denote the proportion of non missing entries in our data, i.e. $\hat{p}$ equals the number of observed games divided by ${n \choose 2}.$
\begin{enumerate}
\item[(a)]
{\bf Filling}

Fill the missing entries of the data matrix $y$ by $1/2$. Henceforth let us denote $y$ to be the data matrix obtained after filling up the missing entries.

\item[(b)]
{\bf Estimate Ranking}

Let $r_i = \sum_{j = 1}^{n} y_{ij}$ be the $i^{th}$ row sum of the data matrix $y.$ In this step we sort the vertices according to the row sums $(r_1,\dots,r_n)$ of the data matrix $y$ and obtain a permutation $\hat{\sigma}$ such that $r_{\hat{\sigma}(1)} \leq \dots \leq r_{\hat{\sigma}(n)}.$ The inverse of the random permutation $\hat{\sigma}$ can be thought of as a proxy for the underlying $\pi^{*}.$  
In case there are ties in the vector $r = (r_1,\dots,r_n),$ break the ties uniformly at random while obtaining the sorting permutation $\hat{\sigma.}$


\item[(c)]
{\bf Debiasing}

In this step we transform $y$ to $\big(\frac{y - J/2}{\hat{p}} + J/2\big) $ where $J$ is a matrix with $J_{ij} = 1$ for any $1 \leq i \neq j \leq n$ and $J_{ii} = 0$ for any $1 \leq i \leq n.$ 
  
\item[(d)]
{\bf Sorting and Projecting}

We then sort the debiased data matrix by applying to it the sorting permutation $\hat{\sigma}$ obtained from Step $2.$ We then project $\big(\frac{y - J/2}{\hat{p}} + J/2\big) \circ \hat{\sigma}$ onto the relevant parameter space. In the skewsymmetric model, we project onto the set $\T \subset [0,1]^{n \times n}$ and in the symmetric model, we project onto the set $\G \subset [0,1]^{n \times n}.$ Both $\T$ and $\G$ are closed convex sets of matrices and hence there exists a unique projection onto them. Let the projection operator be denoted by $Proj$ in both cases. After this step we have the projection of a sorted and debiased data matrix $Proj\Big(\big(\frac{y - J/2}{\hat{p}} + J/2\big) \circ \hat{\sigma}\Big).$

\item[(d)]
{\bf Unsorting}

We now unsort the debiased, sorted and projected data matrix  by applying to it the inverse of the sorting permutation $\hat{\sigma}^{-1}$.  
\item[(e)]
{\bf Estimate by $1/2$ if too many missing entries}

We now define our final estimator $\hat{\theta}$ as follows:
\begin{align}\label{estimator}
\hat{\theta}:= &Proj\Big(\big(\frac{y - J/2}{\hat{p}} + J/2\big) \circ \hat{\sigma}\Big) \circ \hat{\sigma}^{-1} \:\:\text{ if }\hat{p}\ge \frac{1}{n},\\
\notag=&\frac{J}{2}\text{ otherwise}.
\end{align}
\end{enumerate}

\begin{remark}
The transformation $y \rightarrow \frac{y - J/2}{\hat{p}} + J/2$ is merely a debiasing step in the following sense. Let $\pi^*$ be the identity permutation for simplicity. For any fixed $1 \leq i \neq j \leq n,$ the random variable $y_{ij}$ thus takes the value $1$ with probability $p \theta^*_{ij},$ takes the value $0$ with probability $(1 - p) \theta^*_{ij}$ and takes the value $1/2$ with probability $(1 - p).$ Thus $E\:\:\frac{y_{ij} - 1/2}{p} + 1/2 = \theta^*_{ij}.$
\end{remark}
\begin{remark}
Note that in the skewsymmetric (tournament) model, the row sums of the data matrix $y$ correspond to the number of wins or victories for each player, and the column sums of the data matrix correspond to the number of defeats for each player. Hence our sorting step just sorts the teams according to the number of victories (or equivalently the number of defeats, as sum of victory and defeat of each player is $n-1$). Similarly, in the symmetric (graph) model, the row sums of the adjacency matrix $y$ correspond to the empirical degrees of the nodes in the graph. Therefore, our sorting step sorts the vertices according to the empirical degrees.
\end{remark} 

The first step of our algorithm just needs computation of the row sums and then sorting, which combined clearly will only take at most $O(n^2)$ operations. The projection step is thus going to be dominating the computation time. Fortunately there are efficient ways of computing the projection. First of all, the projection by definition has to be zero on the diagonals and would be skew symmetric/symmetric according to our model. Hence in either of the models, it suffices to compute the projection for the upper diagonal part. It turns out that the spaces $\T$ and $\G,$ without the constraint that all elements have to live in $[0,1],$ can be viewed as the space of Isotonic functions on an appropriate Directed Acyclic Graph (DAG) on the domain $\{(i,j): 1 \leq i < j \leq n\}.$ Isotonic functions on a DAG can only increase on following a directed path in the DAG.  Recent results in~\cite{kyng2015fast} study how to compute such Isotonic projections on a general DAG. Their result imply an algorithm with runtime $O(n^3)$ for our problem.  
More classically, the problem of computing this projection is closely related to computing what is called a Bivariate Isotonic Regression(see~\cite{chatterjee2015matrix}). There exists efficient iterative algorithms for this purpose; see page 27 in~\cite{RWD88}. The focus of this paper therefore is not on computation of our estimator but rather on its statistical properties.

\section{Main result}\label{results}
Having defined our estimator, we now make a couple of more definitions. 
\begin{definition}
Henceforth we will use the notation $\Theta$ in place of $\Theta_\mathcal{T}$ or $\Theta_\mathcal{G}$. The implication is that all results hold with $\Theta$ replaced by either of the two parameter sets. 

Setting $[n]:=\{1,2,\cdots,n\}$,  for any $\theta\in \Theta$ define the row sums $R_i(\theta):=\sum_{j\in [n]}\theta_{ij}$ for $i\in [n]$. Also for each $\theta \in \Theta$ and $p\in [0,1]$ define the quantity  $$Q(p,\theta):=\sum_{i\in [n]}\max_{j:|R_j(p\:\theta)-R_i(p\:\theta)| \leq 4\sqrt{np\log n}} \sum_{k = 1}^{n} (p\:\theta_{ik}- p\:\theta_{jk})^2.$$
\end{definition}
The quantity $Q(p,\theta)$ is an important quantity in our analysis. As will be clear from our subsequent analysis, matrices in $\Theta$ with additional structure tend to have smaller $Q(p,\theta).$ We are now ready to state the main theorem of this paper. 
\begin{theorem}\label{main}
Consider the estimator $\hat{\theta}$ defined in~\eqref{estimator}. Let $n \geq 2$ and let $p \geq \frac{2\log n}{n}.$ Then there exists a universal constant $C < \infty$ such that for any $\theta^* \in \Theta$ we have   
\begin{align}
\frac{1}{n^2}\E\|\hat{\theta} - \theta^*\|^2\leq C \Big[\frac{(\log n)^2}{np} + \frac{Q(p,\theta^*)}{n^2 p^2}\Big].
\end{align}
\end{theorem}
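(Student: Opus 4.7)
The plan is to reduce the analysis to two tractable pieces --- the projection error of the debiased data onto the ``correctly oriented'' cone $K\in\{\T,\G\}$, and the permutation-mismatch error induced by $\hat\sigma$ --- and then bound each. Set $\eta:=\pi^*\circ\hat\sigma$ and let $\tilde y:=(y-J/2)/\hat p+J/2$ denote the debiased data matrix. By the invariance of the Frobenius norm under simultaneous row-column permutation, $\|\hat\theta - \theta^*\circ\pi^*\|^2 = \|Proj(\tilde y \circ \hat\sigma) - \theta^*\circ\eta\|^2$, and since $\theta^*\in K$, the triangle inequality yields
$$\|\hat\theta - \theta^*\circ\pi^*\|^2 \leq 2\|Proj(\tilde y\circ\hat\sigma) - \theta^*\|^2 + 2\|\theta^*\circ\eta - \theta^*\|^2.$$

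Next, on the high-probability event $\mathcal E=\{\hat p\ge p/2\}\cap\{\max_i|r_i-\E r_i|\le 2\sqrt{np\log n}\}$ (the first from a Chernoff bound, the second from Bernstein using $\mathrm{Var}(y_{ij})\le p/4$), I would show that $\eta$ only ``swaps'' indices with nearly-equal row sums:
$$|pR_c(\theta^*) - pR_{\eta(c)}(\theta^*)|\le 4\sqrt{np\log n},\qquad c\in[n].$$
This uses that for $\theta^*\in K$ the row sums $R_k(\theta^*)$ are monotone in $k$, so the ideal sorting permutation is $(\pi^*)^{-1}$; a noise fluctuation of at most $2\sqrt{np\log n}$ cannot reorder two positions whose $R$-gap exceeds $4\sqrt{np\log n}$. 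Writing $\theta^*\circ\eta - \theta^* = (\theta^*\circ\eta - M) + (M - \theta^*)$ with $M_{ij}:=\theta^*_{i,\eta(j)}$, then bounding each summand row-by-row (resp.\ column-by-column) using the displayed $R$-gap inequality and the definition of $Q$, yields $\|\theta^*\circ\eta - \theta^*\|^2\le 4\,Q(p,\theta^*)/p^2$.

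For the projection term, the first-order condition for projection onto the convex set $K$ gives, since $\theta^*\in K$,
$$\|Proj(A) - \theta^*\|^2 \leq \langle A-\theta^*,\,Proj(A) - \theta^*\rangle,\qquad A:=\tilde y\circ\hat\sigma.$$
Split $A-\theta^* = (\tilde y-\theta^*\circ\pi^*)\circ\hat\sigma + (\theta^*\circ\eta - \theta^*)$: Cauchy--Schwarz with AM-GM absorbs the second summand into the bound from the previous paragraph. The remaining noise-driven inner product $\langle(\tilde y-\theta^*\circ\pi^*)\circ\hat\sigma,\,Proj(A)-\theta^*\rangle$ is controlled by a metric-entropy / chaining argument over the 2D bi-isotonic cone, exploiting that the noise has per-entry variance of order $1/p$. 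This yields the standard monotone-matrix rate $\E\|Proj(A)-\theta^*\|^2\lesssim n(\log n)^2/p$. Finally, on $\mathcal E^c$ (probability $O(1/n)$) and on $\{\hat p<1/n\}$ (exponentially small under $p\ge 2\log n/n$), the crude bound $\|\hat\theta-\theta^*\circ\pi^*\|^2\le 2n^2$ contributes $O(n)$, which is absorbed into the main rate.

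The main obstacle is the noise-projection bound. Two technical complications arise: (i) the noise matrix $(\tilde y-\theta^*\circ\pi^*)\circ\hat\sigma$ depends on $\hat\sigma$, which itself is measurable with respect to the row sums of $y$; genuine decoupling is needed so that one can treat the noise as mean-zero independent from the optimization over $K$. This will likely be handled by conditioning on the row sums of $y$ or by a union bound over sorting permutations. (ii) Producing the $(\log n)^2$ factor, rather than a single $\log n$ as in 1D isotonic regression, requires chaining adapted to the product structure of the bi-isotonic cone, together with a peeling/localization step to obtain the correct local Gaussian width of $K$ near $\theta^*$.
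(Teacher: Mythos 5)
Your plan is correct and follows essentially the same route as the paper: the same decomposition into a projection error and a permutation-mismatch term, the same Bernstein-based row-sum concentration showing $\hat\sigma$ only matches indices with row-sum gap $O(\sqrt{np\log n})$ (hence the $Q(p,\theta^*)/p^2$ bound via the row/column split), and the same control of the noise term by metric entropy of the bi-isotonic class combined with a union bound over all permutations, with bad events ($\hat p$ small, row-sum deviations) absorbed by crude bounds. The only differences are cosmetic: you use the first-order projection inequality where the paper uses Chatterjee's variational representation $f_{\theta^*}$, and you handle $\hat p$ on the event $\{\hat p\ge p/2\}$ rather than comparing to the known-$p$ estimator as in the paper's Lemma 4.1, both of which lead to the same bounds.
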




\begin{remark}
Theorem~\ref{main} gives an upper bound which is a sum of two terms. The first term scaling like $\tilde{O}(1/(np))$ can be thought of as the risk arising due to the shape constraint imposed by the SST condition. The second term involving $Q(p,\theta^*)$ can be interpreted as the risk arises because of our sorting step; it measures how much $\theta^*$ changes in a frobenius norm squared sense, if its rows/columns are permuted by a typical sorting permutation. 
\end{remark}

\begin{remark}
The strength of Theorem \ref{main} is that it is adaptive in the parameter $\theta^*$, and gives tight asymptotic bounds for several sub-parameter spaces of interest. The term $ \frac{Q(p,\theta^*)}{n^2 p^2}$ dominates the $\frac{(\log n)^2}{np}$ term in most cases of interest. Hence the quantity $Q(p,\theta^*)$ determines the rate of convergence of our estimator.

All our results will be derived as corollaries of our main theorem. We now begin to discuss the various consequences and implications of Theorem~\ref{main}.
\end{remark}
\subsubsection{\textbf{Worst case Risk Bound}}
As a first application of Theorem~\ref{main}, we deduce the worst case risk of our estimator.
\begin{corollary}\label{cor:worst}
There is a universal constant $C<\infty$ such that for any $p\ge \frac{2\log n}{n}$ and $n \geq 2$ we have
\begin{align}
\sup_{\theta^* \in \Theta} \frac{1}{n^2}\E\|\hat{\theta} - \theta^*\|^2\leq C \Big[\frac{(\log n)^2}{np}+\sqrt{\frac{\log n}{np}}\Big].
\end{align}

\end{corollary}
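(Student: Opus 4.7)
The plan is to apply Theorem \ref{main} and bound $Q(p,\theta^*)$ uniformly over $\theta^* \in \Theta$ by the stated rate $n^2 p^2\sqrt{\log n/(np)}$. Since $\theta^* = \theta \circ \pi^*$ for some $\theta$ in $\mathcal{T}$ or $\mathcal{G}$ and some $\pi^* \in S_n$, I would first observe that $Q$ is invariant under the simultaneous row/column permutation $\theta \mapsto \theta\circ\pi$: under such a permutation the row sums are merely relabeled, and both the constraint set and the inner sum $\sum_k(p\theta_{ik}-p\theta_{jk})^2$ transform compatibly. Hence it suffices to bound $Q(p,\theta)$ for $\theta \in \mathcal{T}$ or $\theta \in \mathcal{G}$ directly.

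The key structural fact I would use is that in both the sorted tournament and sorted graph cases, the rows are monotone in the row index: if $i>j$ and $k \notin\{i,j\}$, then $\theta_{ik}\ge\theta_{jk}$. In the graph case this follows immediately from the definition of $\mathcal{G}$ together with symmetry. In the tournament case one checks it by splitting into the three subcases $k>i>j$, $i>j>k$, $i>k>j$, in each of which the inequality follows either directly from monotonicity of the upper triangle of $\mathcal{T}$ or from the skew-symmetric relation $\theta_{ij}=1-\theta_{ji}$ combined with that monotonicity. Because of this comonotonicity, for any such $i,j$,
\begin{equation*}
\sum_{k=1}^n |\theta_{ik}-\theta_{jk}| \;=\; \Bigl|\sum_k(\theta_{ik}-\theta_{jk})\Bigr| \;=\; |R_i(\theta)-R_j(\theta)|.
\end{equation*}

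Next, using the crude pointwise bound $(p\theta_{ik}-p\theta_{jk})^2 \le p\cdot|p\theta_{ik}-p\theta_{jk}|$, which holds because $p|\theta_{ik}-\theta_{jk}|\le 1$, I obtain
\begin{equation*}
\sum_{k=1}^n (p\theta_{ik}-p\theta_{jk})^2 \;\le\; p\sum_k p|\theta_{ik}-\theta_{jk}| \;=\; p\,|R_i(p\theta)-R_j(p\theta)|.
\end{equation*}
The maximum in the definition of $Q$ is taken only over $j$ with $|R_i(p\theta)-R_j(p\theta)|\le 4\sqrt{np\log n}$, so each term in the outer sum is at most $4p\sqrt{np\log n}$, and summing over $i\in[n]$ gives
\begin{equation*}
Q(p,\theta^*) \;\le\; 4n\,p\sqrt{np\log n},\qquad \frac{Q(p,\theta^*)}{n^2p^2} \;\le\; 4\sqrt{\frac{\log n}{np}}.
\end{equation*}
Plugging this into Theorem \ref{main} yields the claimed bound with a new universal constant $C$. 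The only nontrivial step is the comonotonicity check in the skew-symmetric case; after that the estimate of $Q$ is a direct two-line calculation.
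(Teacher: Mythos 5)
Your proof is correct and takes essentially the same route as the paper's: both apply the pointwise bound $(p\theta_{ik}-p\theta_{jk})^2\le p|p\theta_{ik}-p\theta_{jk}|$ and then use the fact that $\theta_{ik}-\theta_{jk}$ has a fixed sign over $k$ to identify $\sum_k|p\theta_{ik}-p\theta_{jk}|$ with $|R_i(p\theta)-R_j(p\theta)|$, obtaining $Q(p,\theta^*)\le 4(np)^{3/2}\sqrt{\log n}$; you simply make explicit the comonotonicity check and the permutation-invariance of $Q$, both of which the paper leaves implicit. One small caveat applies to both arguments: as you yourself restrict the monotonicity check to $k\notin\{i,j\}$, the sign can actually flip at $k\in\{i,j\}$ since $\theta_{ii}=\theta_{jj}=0$ by convention, so the claimed identity $\sum_k|p\theta_{ik}-p\theta_{jk}|=|R_i(p\theta)-R_j(p\theta)|$ can be off by an additive $O(p)$; this is harmless because $p\le C\sqrt{np\log n}$ when $p\ge 2\log n/n$ and $n\ge 2$, so it only affects the universal constant, but strictly speaking the equality should be an inequality up to that additive term.
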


\begin{remark}
The brute force LSE in our problem achieves a MSE of $O(1/(np))$ upto log factors, which is minimax rate optimal (upto log factors) in the tournament setting (see Theorem 5(a) in~\cite{shah2016stochastically}). The same fact can be shown to be true in the graph setting by an application of Lemma 3.1 in~\cite{chatterjee2015adaptive}. We do not carry this out in this manuscript.

Comparing Theorem~\ref{main} to the minimax rate, the $\tilde{O}((np)^{-\frac{1}{2}})$ rate achieved by our estimator is clearly worse than the minimax rate of estimation. However the only method known to achieve the minimax rate is the LSE, which is perhaps not computationally feasible. This raises the important question of whether there exists a computationally feasible estimator achieving the minimax rate in this problem. This question deserves further study and is beyond the scope of the current manuscript.

The authors in~\cite{shah2016stochastically} (see Theorem $5b$) improved the analysis of an estimator based on singular value threshholding proposed in~\cite{ChaUSVT15} and demonstrated its rate of convergence to be $\tilde{O}((np)^{-\frac{1}{2}}).$ Hence our estimator matches the best known rate of convergence for computationally feasible estimators.  
\end{remark}

\begin{remark}
It follows from Corollary \ref{cor:worst} that the MSE converges to $0$ as soon as $np\gg (\log n)^2$. 
On the other hand, if $np$ converges to a finite number $\lambda$, the known minimax lower bound implies that the MSE stays bounded away from $0$. 
\end{remark}


A natural question is whether the upper bound for our estimator in Corollary~\ref{cor:worst} is tight. The following example suggests that the given upper bound is tight at least when $p = 1$.
\\
Define $\theta^{wc} \in \T$ for any $i \neq j$ as 
\begin{align}\label{hard}
\theta^{wc}_{i,j} =& \frac{1}{2} + \frac{1}{4} \I\{i > j\}.
\end{align}

Simulations of the MSE (see figure \ref{fig:2}) suggest that our exponent of $n$ in the upper bound in Corollary~\ref{cor:worst} is tight.
\begin{figure*}[ht]
\begin{center}
\includegraphics[scale=.6]{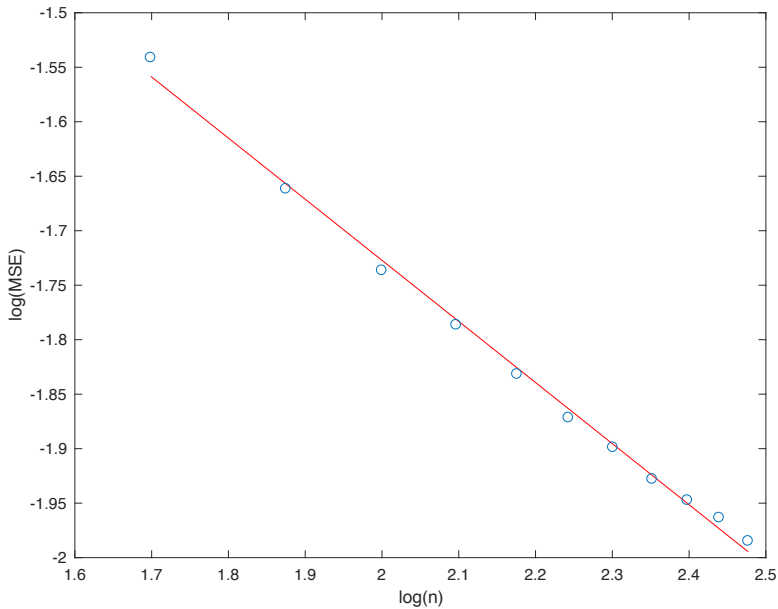}
\caption{\text{MSE for $\theta=\theta^{wc}$ (worst case behavior)  }}
\end{center}
\label{fig:2}
\end{figure*}
For the plot we have simulated our data when the true underlying $n \times n$ parameter matrix is given by~\eqref{hard}. We have simulated $100$ times for each sample size $n$ ranging from $50$ to $300$ in increments of $25.$ 
 A plot of log(n) (base $10$) versus log(MSE) clearly shows a linear behavior, and the best fitted line has slope $-0.5607$, which is close to the predicted exponent $-0.5.$ 



\subsection{Adaptive Risk Bounds}
The main reason for studying our estimator is that it exhibits automatic adaptation properties. By automatic adaptation we mean that even though our estimator achieves $\tilde{O}(n^{-\frac{1}{2}})$ rate of estimation globally, it achieves provably faster rates of estimation for several subclasses of our parameter space which are of independent interest. We now describe some of these subclasses along with the associated risk bounds. 

\subsubsection{Block matrices}
Let us consider the class of matrices within our parameter space $\Theta$ which are a permuted version of some $k \times k$ blockmatrix. This class of matrices (without the restriction of being within $\Theta$) is popularly known as the Stochastic Blockmodel in Network Analysis. In the Tournament setting, one can think of the situation where there are $n$ teams but only $k \ll n$ levels of teams. Within each level, each team has the same ability. Therefore, assuming a block structure on the true pairwise comparison matrix is a natural way to impose sparsity in the model. 
\begin{definition}
For $k\in [n]$, let $\Theta^{(k)}\subseteq \Theta$ denote the subset of all $k\times k$ block matrices with equal sized blocks, upto an unknown permutation. 
\end{definition}
\begin{corollary}\label{cor:block}
There exists a universal constant $C<\infty$ such that for any $p\ge \frac{2\log n}{n}$ we have
$$\sup_{\theta^* \in \Theta^{(k)}}\frac{1}{n^2}\E\|\hat{\theta}-\theta^*\|^2\le C\Big[\frac{\min(k,\sqrt{np})\log n}{np}+\frac{(\log n)^2}{np}\Big].$$

\end{corollary}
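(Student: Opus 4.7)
The plan is to derive Corollary~\ref{cor:block} from Theorem~\ref{main} by bounding $Q(p,\theta^*)$ for $\theta^*\in\Theta^{(k)}$. Since $Q(p,\cdot)$ is invariant under simultaneous row/column permutation of its argument (a relabeling of the dummy indices in the definition), I may assume $\theta^*$ is already in sorted $k$-block form: writing $\phi:[n]\to[k]$ for the map sending each index to its block (each block $B_a$ of size $m=n/k$), one has $\theta^*_{ij}=u_{\phi(i),\phi(j)}$ for $i\ne j$, where $u\in[0,1]^{k\times k}$ is a monotone block-profile. The crucial structural consequence is that for $a<b$,
\[
u_{bc}\ge u_{ac}\qquad\text{for every }c\in[k].
\]
In the $\G$ case this is immediate from the monotonicity defining $\Theta$; in the $\T$ case (where the skew-symmetry forces $u_{aa}=\tfrac{1}{2}$ and $u_{ba}=1-u_{ab}$) it follows from a short case-split on the position of $c$ relative to $a$ and $b$, using $u_{ab}\le\tfrac{1}{2}$ for $a<b$. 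The diagonal convention $\theta^*_{ii}=0$ perturbs each row sum and each pairwise sum of squared differences by lower-order terms which I absorb into constants.

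With this parametrization, each $i\in B_a$ satisfies $R_i(p\theta^*)=ps_a+O(p)$ for $s_a:=m\sum_c u_{ac}$, and for $j\in B_b$,
\[
\sum_{\ell=1}^n(p\theta^*_{i\ell}-p\theta^*_{j\ell})^2=p^2m\,D_{ab}+O(p^2),\qquad D_{ab}:=\sum_{c=1}^k(u_{ac}-u_{bc})^2.
\]
The monotonicity above now yields the key inequality: for $a<b$,
\[
s_b-s_a=m\sum_{c=1}^k(u_{bc}-u_{ac})=m\,\|u_{b\cdot}-u_{a\cdot}\|_1\ge m\sqrt{D_{ab}},
\]
using $\|x\|_1\ge\|x\|_2$ in finite dimension. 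Therefore the admissibility condition $|R_j-R_i|\le 4\sqrt{np\log n}$ from the definition of $Q$ forces $|s_b-s_a|\le C_1\sqrt{n\log n/p}$, which in turn gives $D_{ab}\le C_2\, k^2\log n/(np)$ and hence
\[
\sum_\ell(p\theta^*_{i\ell}-p\theta^*_{j\ell})^2\le p^2 m\cdot C_2\, k^2\log n/(np)=C_3\, pk\log n
\]
uniformly over admissible pairs $(i,j)$. Summing over $i\in[n]$ produces $Q(p,\theta^*)/(n^2 p^2)\le C_4\, k\log n/(np)$.

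Plugging this into Theorem~\ref{main} yields the bound $C[k\log n/(np)+(\log n)^2/(np)]$. To obtain the $\min(k,\sqrt{np})$ version stated in the corollary, I combine with the worst-case bound from Corollary~\ref{cor:worst}: its rate $\sqrt{\log n/(np)}$ is dominated by $\sqrt{np}\log n/(np)=\log n/\sqrt{np}$ for $n\ge 3$, so Corollary~\ref{cor:worst} itself delivers the rate $\sqrt{np}\log n/(np)+(\log n)^2/(np)$. Taking the minimum of the two derived bounds yields $\min(k,\sqrt{np})\log n/(np)+(\log n)^2/(np)$, which is exactly the claim. The only structurally nontrivial step is the monotonicity inequality $s_b-s_a\ge m\sqrt{D_{ab}}$, and this is precisely where the hypothesis $\theta^*\in\Theta$ is genuinely used: it converts the sorting-error penalty $Q$ into a quantity controlled by the block complexity $k$ rather than by the ambient dimension $n$.
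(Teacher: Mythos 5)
Your proof is correct, and it uses the same essential ingredient as the paper—namely that because blocks are monotone, all coordinates of the row-difference vector $u_{a\cdot}-u_{b\cdot}$ share a sign, so the admissibility window $|R_i - R_j|\le 4\sqrt{np\log n}$ controls not only the sum of these differences but also their squared $\ell_2$ norm. The route differs in two minor ways. First, you pass from $\ell_1$ control to $\ell_2$ control via $\|x\|_2\le\|x\|_1$, whereas the paper uses $\sum a^2\le(\max|a|)\cdot\sum|a|$ together with the trivial bound $\max|a|\le 1$; the paper's version squeezes out the $\min(k,\sqrt{np})$ in a single computation, while yours yields only the $k\log n/(np)$ rate directly. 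Second, to recover the $\min(k,\sqrt{np})$ form you take a minimum with the worst-case Corollary~\ref{cor:worst}, observing that $\sqrt{\log n/(np)}\le\sqrt{np}\,\log n/(np)$ for $n\ge 3$. That is a perfectly legitimate (and arguably more modular) way to finish, at the cost of invoking an extra corollary. Your justification of $u_{bc}\ge u_{ac}$ for $a<b$ and all $c$, including the diagonal blocks in the skew-symmetric case via $u_{aa}=\tfrac12$ and $u_{ba}=1-u_{ab}\ge\tfrac12$, is correct and slightly more careful than what the paper makes explicit; and your treatment of the $\theta_{ii}=0$ convention as an $O(p),O(p^2)$ perturbation absorbed into the constant matches the paper's (implicit) handling.
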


%


%

\begin{remark}
In particular when $k$ is fixed, we get from corollary \ref{cor:block} that
$$\frac{1}{n^2} \E |\hat{\theta}-\theta^*\|^2\le C \frac{(\log n)^2}{np}.$$
The next theorem shows that the above upper bound is unimprovable in the sense that this is the minimax error rate for $2\times 2$ blocks of equal size, upto $\log$ factors. 
In the case when the latent ranking is known, the estimation problem is essentially equivalent to Bivariate Isotonic Regression (see~\cite{chatterjee2015matrix}), where the minimax error rate for $2 \times 2$ block matrices (without a latent permutation) is shown to be $\tilde{O}(n^{-2}).$  This means that the latent permutation in our parameter space makes the statistical estimation problem fundamentally harder in a minimax sense. 
\end{remark}

\begin{theorem}\label{minimax1}
Consider the subset $\Theta^{(2)} \subset \Theta$ of all equal sized $2 \times 2$ block matrices. Then for some universal constant $c>0$ we have
\begin{equation*}
\inf_{\tilde{\theta}} \sup_{\theta \in \Theta^{(2)}} \frac{1}{n^2} \sum_{i = 1}^{n} \sum_{j = 1}^{n} \E (\tilde{\theta}_{ij} - \theta_{ij})^2 \geq \frac{c}{np}.
\end{equation*}
\end{theorem}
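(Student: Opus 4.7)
The plan is to apply Fano's inequality to an exponentially large packing of matrices in $\Theta^{(2)}$ indexed by balanced subsets of $[n]$. Although a $2\times 2$ block matrix has only a handful of continuous parameters, the latent permutation endows $\Theta^{(2)}$ with $\binom{n}{n/2}$ discrete configurations, and this combinatorial richness is what drives the $1/(np)$ lower bound. By a standard probabilistic argument, I first fix a family of subsets $A_1,\ldots,A_M\subset[n]$ with $|A_k|=\lfloor n/2\rfloor$, pairwise symmetric differences $|A_k\triangle A_{k'}|\ge n/4$ for $k\ne k'$, and cardinality $M=2^{cn}$ for some universal constant $c>0$; a uniformly random family of $2^{cn}$ balanced subsets satisfies the required separation with positive probability when $c$ is small enough.

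For a small parameter $\delta\in(0,1/4)$ to be chosen later, I associate to each $A_k$ a matrix $\theta^{(k)}\in\Theta^{(2)}$. In the tournament case I set
\begin{equation*}
\theta^{(k)}_{ij}=\tfrac12+\delta\bigl(\mathbf{1}\{j\in A_k\}-\mathbf{1}\{i\in A_k\}\bigr)\quad\text{for } i\ne j,\qquad \theta^{(k)}_{ii}=0,
\end{equation*}
which coincides with $\tilde\theta\circ\pi_k$ for any permutation $\pi_k$ sending $A_k$ onto $\{1,\ldots,n/2\}$ and the canonical $2\times 2$ block matrix $\tilde\theta\in\T$ with three block values $\tfrac12,\tfrac12-\delta,\tfrac12$; the SST constraints defining $\T$ can be checked directly. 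In the graph case I use instead the canonical block matrix with values $\tfrac12-\delta,\tfrac12,\tfrac12+\delta$, which respects the monotonicity defining $\G$ and yields $\theta^{(k)}_{ij}=\tfrac12+\delta\bigl(\mathbf{1}\{i,j\notin A_k\}-\mathbf{1}\{i,j\in A_k\}\bigr)$.

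A direct calculation exploiting $|A_k|=|A_{k'}|=n/2$ shows $\|\theta^{(k)}-\theta^{(k')}\|_F^2\ge c_1 n^2\delta^2$ whenever $|A_k\triangle A_{k'}|\ge n/4$. Since every entry of $\theta^{(k)}$ lies in $[\tfrac14,\tfrac34]$, the Bernoulli KL satisfies $\mathrm{KL}(\mathrm{Bern}(\alpha)\|\mathrm{Bern}(\beta))\le C(\alpha-\beta)^2$ in this range, so using the independent Bernoulli-$p$ masking,
\begin{equation*}
\mathrm{KL}\bigl(\P_{\theta^{(k)}}\,\|\,\P_{\theta^{(k')}}\bigr)\le Cp\,\|\theta^{(k)}-\theta^{(k')}\|_F^2\le C'n^2p\,\delta^2.
\end{equation*}
Choosing $\delta^2=c_0/(np)$ for a small enough $c_0$ makes the maximal pairwise KL at most $\tfrac12\log M$, so Fano's inequality yields
\begin{equation*}
\inf_{\tilde\theta}\sup_{\theta\in\Theta^{(2)}}\frac{1}{n^2}\E\|\tilde\theta-\theta\|_F^2\;\gtrsim\;\delta^2\;\gtrsim\;\frac{1}{np}.
\end{equation*}
A trivial estimator (such as $J/2$) covers the regime where $np$ is so small that the prescribed $\delta$ exceeds the allowed range.

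The main obstacle is the structural verification step: checking that each constructed $\theta^{(k)}$ genuinely belongs to $\Theta^{(2)}$. One must verify that the canonical block matrix satisfies the SST constraints for $\T$ or the row-monotonicity condition for $\G$, and that the skew-symmetric (resp.\ symmetric) off-diagonal structure is correctly reproduced by the displayed formula for $\theta^{(k)}_{ij}$. Once these structural checks are carried out, everything else reduces to a routine Fano computation with Bernoulli KL bounds.
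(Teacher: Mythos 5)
Your proposal is correct and follows essentially the same route as the paper's proof: a Gilbert--Varshamov-type packing of $\Theta^{(2)}$ indexed by balanced subsets of $[n]$, block perturbations of size $\delta \asymp (np)^{-1/2}$, a Bernoulli KL bound of order $p\,n^2\delta^2$, and an application of Fano's inequality (the paper writes out only the symmetric case and uses slightly different block values, but the substance is identical). The one blemish is the closing remark that a trivial estimator such as $J/2$ ``covers'' the small-$np$ regime --- an estimator can never certify a lower bound --- but this is harmless since that regime is excluded by the paper's standing assumption $p \ge 2\log n/n$.
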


\textit{The above theorem shows that when $k$ is fixed our estimator is minimax optimal upto logarithmic factors for the class $\Theta^{(k)}.$}

\begin{remark}
It is instructive to compare with known results when $p = 1.$ Without the monotonicity constraint, it was shown in \cite{gao2015rate} that for symmetric $k\times k$ block matrices the minimax rate is $O(\frac{k^2}{n^2}+\frac{\log k}{n})$. Thus for $k\in [1,\sqrt{n}]$ the minimax rate with and without monotonicity constraints are both $\tilde{O}(n^{-1})$. For $k\in [\sqrt{n},n]$ the minimax rate with monotonicity is $\tilde{O}(n^{-1})$, whereas without monotonicity the minimax rate is strictly larger. 
\end{remark}

\subsubsection{Generalized Bradley Terry model + Generalized $\beta$ model}
In the Tournament setting, if the Bradley Terry model is correct, one can use the MLE of the weights vector $w$ to estimate $\theta.$ It is known that this estimator attains a fast rate of convergence of $O((np)^{-1})$ (see Theorem $5c$ in~\cite{shah2016stochastically}). A natural question is whether our estimator attains this fast rate of convergence whenever the true matrix is actually of the Bradley Terry form. Our next corollary shows this to be true in a more general sense. Before stating the corollary, let us define the Generalized Bradley Terry Model $\Theta_{gbt,M}$ contained within the class of SST matrices. 
\begin{definition}
Let $\Theta_{gbt,M} \subset \Theta_\T$ denote the subset of all tournament matrices such that $\theta_{i,j}=F(w_i-w_j)$, where $F$ is a unknown symmetric distribution function on $\R$ (i.e. $F(x)+F(-x)\equiv 1$) with a continuous strictly positive density function function, and $\{w_i\}_{i\in [n]}$ is a sequence of real numbers lying in a compact interval $[-M,M]$. It is not hard to check that the class $\Theta_{gbt,M}\subset \Theta_\T$ indeed satisfies the SST condition. In particular, setting $F(x)=\frac{e^x}{1+e^x}$ and $F(x)=\Phi(x)$ (the normal distribution function) we get the usual Bradley Terry model and the Thurstone model (see~\cite{bradley1952rank,thurstone1927law}) respectively. 
Let us call this class of matrices the Generalized Bradley Terry Model. 
\end{definition}

Let us now define the analogous class of matrices in the symmetric model on graphs. 

\begin{definition}
Let $\Theta_{gbm,M}\subset \Theta_\cG$ denote the subset of all expected adjacency matrices such that $\theta_{i,j}=F(w_i+w_j)$, where $F$ is  a  distribution function on $\R$ with a continuous strictly positive density function, and $\{w_i\}_{i\in [n]}$ is a sequence of real numbers in  $[-M,M]$.  In particular for the choice $F(x)=\frac{e^x}{1+e^x}$ gives the $\beta$ model on networks, which has originated from Social Sciences and has been studied in Statistics(c.f.~\cite{ChatDiaSly},~\cite{mukherjee2016detection} and references there-in). The class $\Theta_{gbm,M}$ generalizes the usual $\beta$ model to allow for a more general class of distribution functions.
\end{definition}

%
%
%

With $F$ assumed to be known, estimation in the Generalized Bradley Terry model has been studied by~\cite{shah2016stochastically}. We present our next corollary treating $F$ as an unknown parameter. 
\begin{corollary}\label{cor:gbm}
There exists a universal constant $C < \infty$ such that for any $p \geq \frac{2\log n}{n}$ we have
$$\sup_{\theta^* \in \Theta_{gbt,M}} \frac{1}{n^2} \E\|\hat{\theta}-\theta^*\|^2\le C\Big[\Big(\frac{L}{L'}\Big)^2\frac{\log n}{np}+\frac{(\log n)^2}{np}\Big],$$
where $L:=\sup_{|x|\le 2M}f(x)$ and $L':=\inf_{|x|\le 2M}f(x)$.
The same risk bound holds for the Generalized Beta Model $\Theta_{gbm,M}.$
\end{corollary}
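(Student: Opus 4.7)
The strategy is to apply Theorem~\ref{main} directly and reduce the corollary to bounding the quantity $Q(p,\theta^*)$ for $\theta^* \in \Theta_{gbt,M}$ (respectively $\Theta_{gbm,M}$). Since both permuting the rows and columns of $\theta$ by a common $\pi$ reparameterizes the weights via $\tilde w_i = w_{\pi(i)}$ while preserving the bound $|\tilde w_i|\le M$, and since $Q(p,\theta \circ \pi)=Q(p,\theta)$ by relabeling the indices in its defining sum and max, it suffices to work with a matrix of the form $\theta_{ij}=F(w_i - w_j)$ (or $F(w_i+w_j)$ in the symmetric case) with $w_i \in [-M,M]$.

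The key observation is that the row sums and the rows themselves are controlled by the weight differences through nearly-matching constants $L,L'$. By the mean value theorem, for every $k$ there exists $\xi_k$ with $|\xi_k|\le 2M$ such that $F(w_i - w_k) - F(w_j - w_k) = f(\xi_k)(w_i - w_j)$. Summing $k$-wise gives the two-sided bound
\begin{equation*}
n L' |w_i - w_j| \le |R_i(\theta) - R_j(\theta)| \le n L |w_i - w_j|,
\end{equation*}
where the lower bound uses that all the increments have the same sign (since $F$ is increasing). Squaring term-by-term yields
\begin{equation*}
\sum_{k=1}^n (p\theta_{ik} - p\theta_{jk})^2 \le p^2 n L^2 (w_i - w_j)^2 \le \frac{p^2 L^2}{n (L')^2}\bigl(R_i(\theta) - R_j(\theta)\bigr)^2.
\end{equation*}

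Now I would use the constraint in the definition of $Q(p,\theta)$: the condition $|R_j(p\theta)-R_i(p\theta)|\le 4\sqrt{np\log n}$ translates to $(R_i(\theta)-R_j(\theta))^2 \le 16 n\log n / p$, so the displayed right-hand side is at most $16 p L^2 \log n / (L')^2$. Summing over $i\in[n]$ gives
\begin{equation*}
Q(p,\theta) \le \frac{16 n p L^2 \log n}{(L')^2}, \qquad \frac{Q(p,\theta)}{n^2 p^2} \le \frac{16 L^2 \log n}{(L')^2 np},
\end{equation*}
and substituting into Theorem~\ref{main} yields the claimed bound. For the Generalized Beta Model the argument is essentially identical: with $\theta_{ij}=F(w_i+w_j)$, the same mean value identity produces $F(w_i+w_k)-F(w_j+w_k)=f(\xi_k)(w_i-w_j)$ with $|\xi_k|\le 2M$, so both chains of inequalities above go through verbatim.

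The main obstacle is really just verifying the two-sided control of $|R_i(\theta)-R_j(\theta)|$ by $n|w_i-w_j|$ with matching constants; once that is in hand, the computation is mechanical. A minor bookkeeping point worth checking carefully is that $Q$ truly is invariant under simultaneous row/column permutation (so that we can genuinely assume $\pi^* = \mathrm{id}$), but this follows from a direct relabeling argument in the definition of $Q$, since permuting both rows and columns permutes the row sums and preserves the $\sum_k (\theta_{ik}-\theta_{jk})^2$ terms.
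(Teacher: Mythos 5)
Your proposal is correct and follows essentially the same route as the paper: reduce to bounding $Q(p,\theta^*)$ via Theorem~\ref{main}, use the two-sided control $L'|w_i-w_j|\le|\theta^*_{ik}-\theta^*_{jk}|\le L|w_i-w_j|$ (the paper states this directly where you invoke the mean value theorem), and then use the row-sum constraint in the definition of $Q$ to bound $|w_i-w_j|$, yielding the same $16p(L/L')^2\log n$ bound per row. Your explicit remarks on permutation invariance of $Q$ and the sign argument for the row-sum lower bound are details the paper leaves implicit, but the argument is the same.
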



If the distribution function $F$ is known, the authors in~\cite{shah2016stochastically} (Theorem $5c$) show that the estimator based on the MLE of $w$ achieves the minimax optimal rate $O((np)^{-1})$, under the stronger condition that the density $f$ is strongly log concave and twice differentiable. \textit{This implies that our estimator is minimax rate optimal upto log factors for the bigger class  $\Theta_{gbt,M}$ without using the knowledge of $F$.} Obviously, computing the MLE takes into account the explicit knowledge of $F.$ The estimator studied in this paper attains the same fast rate of convergence as the MLE upto log factors, but without the knowledge of $F.$ 


\subsubsection{Smooth matrices}
In the symmetric model (graph setting) there has been a tradition of studying estimation of graphons satisfying some smoothness condition (see~\cite{gao2015rate} and references therein). We think it is a natural question to ask what happens when the true pairwise probability matrix in the Tournament setting (in addition to satisfying SST) is a smooth matrix, upto an unknown permutation. This motivates us to define the following class of matrices satisfying a discrete version of the usual Holder smoothness conditions. 
\begin{definition}
For $\alpha,L>0,$ let $\Theta(\alpha,L)\subset \Theta$ denote the subset of all permuted versions of Holder continuous matrices with order $\alpha$ and Holder constant $L$ , i.e. $\theta,$ after being permuted in rows and columns, satisfies $$|\theta_{i,j}-\theta_{i,k}|\le L\frac{|j-k|^\alpha}{n^\alpha}$$ for all $i \neq j,i \neq k,i,j,k\in [n]$. 
\end{definition}


The next corollary shows that our estimator provably attains faster rates of convergence than $\tilde{O}(n^{-\frac{1}{2}})$ whenever the true matrix, upto an unknown permutation, satisfies smoothness conditions as above. 
\begin{corollary}\label{uppersmooth}
There exists a universal constant $C<\infty$ such that for any $p\ge \frac{2\log n}{n}$ we have
$$\sup_{\theta^* \in \Theta(\alpha,L)} \frac{1}{n^2} \E\|\hat{\theta}-\theta^*\|^2\le C\Big[L^{\frac{1}{\alpha+1}}\frac{\log n}{(np)^{\frac{2\alpha+1}{2\alpha+2}}}+\frac{(\log n)^2}{np}\Big]$$

\end{corollary}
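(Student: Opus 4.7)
The plan is to invoke Theorem~\ref{main}, which reduces the task to showing
$$\frac{Q(p,\theta^*)}{n^2p^2}\le C\,L^{\frac{1}{\alpha+1}}\frac{\log n}{(np)^{\frac{2\alpha+1}{2\alpha+2}}}$$
for every $\theta^*\in\Theta(\alpha,L)$; the $\tfrac{(\log n)^2}{np}$ term in the target is already supplied by the theorem. Both $\Theta_\T$ and $\Theta_\G$ are invariant under simultaneous relabeling of rows and columns, so I would first reduce to $\pi^*=\mathrm{id}$. Then $\theta^*$ is simultaneously monotone in the sense required by $\T$ or $\G$ and Hölder smooth with exponent $\alpha$ and constant $L$, and the row-sum sequence $R_i=\sum_k\theta^*_{ik}$ is monotone in $i$.

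Writing $T:=4\sqrt{n\log n/p}$, the inner quantity in $Q(p,\theta^*)$ is $\max_{j:|R_j-R_i|\le T}\sum_k(\theta^*_{ik}-\theta^*_{jk})^2$. I would use two complementary bounds on this sum, valid for any $i,j$ with $|R_j-R_i|\le T$:
\begin{enumerate}
\item[(i)] \textit{Pure H\"older:} $\sum_k(\theta^*_{ik}-\theta^*_{jk})^2\le n\cdot(L|i-j|^\alpha/n^\alpha)^2=L^2|i-j|^{2\alpha}/n^{2\alpha-1}$.
\item[(ii)] \textit{H\"older $\times$ monotonicity:} the sorted structure forces the vector $\theta^*_{j\cdot}-\theta^*_{i\cdot}$ to have coordinatewise constant sign, so $\sum_k|\theta^*_{ik}-\theta^*_{jk}|=|R_j-R_i|\le T$, giving $\sum_k(\theta^*_{ik}-\theta^*_{jk})^2\le\max_k|\theta^*_{ik}-\theta^*_{jk}|\cdot|R_j-R_i|\le LT|i-j|^\alpha/n^\alpha$.
\end{enumerate}
In the tournament case one argues identically after identifying the sorted cone with a matrix whose upper-triangular part is coordinatewise monotone, absorbing the skew-symmetric constants into $C$.

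I would then introduce a bandwidth $m$, to be optimized, and split the sum over $i$ according to whether $M_i:=\max\{|i-j|:|R_j-R_i|\le T\}$ is $\le m$ or $>m$. On the small group, bound (i) contributes at most $nL^2m^{2\alpha}/n^{2\alpha-1}$ to $Q/p^2$. On the large group, bound (ii) is used, together with the induced H\"older estimate for the row-sum sequence itself, $|R_j-R_i|\le L|i-j|^\alpha n^{1-\alpha}$, which gives the universal lower bound $M_i\gtrsim(Tn^{\alpha-1}/L)^{1/\alpha}$ that prevents the tuning parameter $m$ from being taken too small. Optimizing $m$ so as to balance the two contributions yields $m\asymp n\cdot(T/(Ln^{1-\alpha}))^{1/(\alpha+1)}$ and the claimed rate.

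The main obstacle lies in the large-group contribution, because a priori $M_i$ can be as large as $n$ when the row sums have long flat stretches. The key cancellation is that in $\G$ (respectively in $\T$ after the skew-symmetric identification) a flat piece of $R$ forces the corresponding rows of $\theta^*$ to coincide exactly, so that the inner squared sum vanishes there. Making this cancellation quantitative requires working with the true increments $R_{j_i^+}-R_i$ in place of the pessimistic bound $T$; this refinement is where the exponent $\tfrac{1}{\alpha+1}$ on $L$ originates.
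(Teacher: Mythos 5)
Your overall strategy (reduce to bounding $Q(p,\theta^*)$ via Theorem~\ref{main}, assume $\pi^*=\mathrm{id}$, and use $\sum_k(\theta^*_{ik}-\theta^*_{jk})^2\le\max_k|\theta^*_{ik}-\theta^*_{jk}|\cdot\sum_k|\theta^*_{ik}-\theta^*_{jk}|$ with the sign-constancy of $\theta^*_{j\cdot}-\theta^*_{i\cdot}$, valid up to the two diagonal columns) matches the paper. The gap is in how you control the $\ell_\infty$ factor. The paper's key step, inequality \eqref{eq:holder}, shows that for \emph{every} pair with $|R_i(p\theta^*)-R_j(p\theta^*)|\le 4\sqrt{np\log n}$ one has $\max_k|\theta^*_{ik}-\theta^*_{jk}|\le(4\sqrt{\log n/(np)})^{\alpha/(\alpha+1)}(4L)^{1/(\alpha+1)}$, \emph{uniformly in} $|i-j|$: if the maximal entry gap is $\delta$, H\"older continuity in the \emph{column} index keeps the gap above $\delta/2$ on a window of $\asymp(\delta/L)^{1/\alpha}n$ columns around the maximizing column, so the row-sum gap is at least of order $p\,n(\delta/L)^{1/\alpha}\delta$, forcing $\delta$ small. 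You never use this interpolation; you only bound $\max_k|\theta^*_{ik}-\theta^*_{jk}|\le L|i-j|^\alpha/n^\alpha$ (H\"older in the row index) and then try to control $|i-j_i|$ by splitting on $M_i$ and by the observation that exactly flat stretches of $R$ force equal rows.

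That route breaks on exactly the configurations it must handle. If row $i$ lies in a long flat stretch, the maximizing $j_i$ typically sits just past its edge, so $|i-j_i|$ can be of order $n$, $|R_{j_i}-R_i|$ is genuinely of order $T$, and the rows do not coincide; one can arrange a bump of height $\delta$ and width $(\delta/L)^{1/\alpha}n$ appearing as $j$ crosses the edge, so that neither bound (i) nor (ii) nor the exact-equality cancellation improves on the trivial per-row bound $T$. Summed over $n$ rows that only reproduces the worst-case rate of Corollary~\ref{cor:worst}, $\tilde{O}((np)^{-1/2})$, not $(np)^{-\frac{2\alpha+1}{2\alpha+2}}$; and replacing $T$ by the true increment $R_{j_i}-R_i$ does not help, since at the maximizer that increment is of order $T$. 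Note also that both of your bounds are increasing in $|i-j|$, so putting (ii) on the group $M_i>m$ caps nothing, and your bandwidth $m\asymp n(T/(Ln^{1-\alpha}))^{1/(\alpha+1)}$ exceeds $n$ already for $\alpha=1$, $p=1$, $L=O(1)$ (there $T/(Ln^{1-\alpha})\asymp\sqrt{n\log n}\gg1$), making the small-group bound $L^2(m/n)^{2\alpha}$ vacuous — a sign the balancing was not actually carried out. The missing ingredient is precisely the column-direction interpolation \eqref{eq:holder}; with it, no split over $i$ is needed: the paper applies the $\ell_\infty\cdot\ell_1$ bound with $\ell_\infty\le\delta$ and $\ell_1\le 4\sqrt{np\log n}$ to every admissible pair and sums over the $n$ rows.
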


\begin{remark}
As the above corollary shows, the adaptation of our estimator depends crucially on the order of the Holder class $\alpha$. In particular, for the class of Lipschitz matrices (which correspond to the choice $\alpha=1$) with no missing entries (which correspond to $p=1$), the corollary implies the following upper bound:
\begin{align*}
\sup_{\theta^* \in \Theta(1,1)} \frac{1}{n^2} \E \|\hat{\theta}-\theta^*\|^2 \leq & C \:\frac{\log n}{n^{3/4}}.
\end{align*}
A natural question is whether the worst case MSE over $\Theta(1,1)$ actually scales like $\tilde{O}(n^{-3/4})$. We do not know the answer to this question. However, we do have an explicit example of $\theta^*\in \Theta(1,1)$ for which the quantity $Q(1,\theta^*)$ actually scales like $n^{\frac{5}{4}}$, which suggests  
that it is not possible to prove a better rate than $\tilde{O}(n^{-3/4})$ with our proof technique.
\end{remark}

\textit{Even though the worst case risk over the class of Lipschitz matrices is $\tilde{O}(n^{-3/4})$, under an extra assumption that $\theta$ is lower Lipschitz as well, we get an improved MSE of $\tilde{O}(n^{-1})$}. More generally, the same holds for lower Holder continuous matrices as well. To make this precise we propose the following definition:
\begin{definition}
For $\alpha, L, L'>0$ let $\Theta(\alpha,L,L')\subset \Theta(\alpha,L)$ denote the subset of all lower Holder continuous matrices with lower Holder constant $L'$, i.e. after being permuted in rows and columns by some permutation it satisfies 
$$|\theta_{i,j}-\theta_{i,k}|\ge L'\frac{|j-k|^\alpha}{n^\alpha},$$
for all $i \neq j,i \neq k,i,j,k\in [n]$. 
\end{definition}

\begin{corollary}\label{cor:lipstrict}
There exists a universal constant $C<\infty$ such that for any $p\ge \frac{2\log n}{n}$ we have
$$\sup_{\theta^* \in \Theta(\alpha,L,L')} \frac{1}{n^2} \E \|\hat{\theta}-\theta^*\|^2\le C\Big[\Big(\frac{L}{L'}\Big)^2\frac{\log n}{np}+\frac{(\log n)^2}{np}\Big].$$
\end{corollary}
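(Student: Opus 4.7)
The plan is to deduce Corollary \ref{cor:lipstrict} from Theorem \ref{main} by producing a good upper bound on $Q(p,\theta^*)$ for every $\theta^* \in \Theta(\alpha,L,L')$. The first term of Theorem \ref{main}, namely $(\log n)^2/(np)$, already appears in the target bound, so all the work goes into showing $Q(p,\theta^*) \le C(L/L')^2\, np\log n$, which after division by $n^2p^2$ produces the second term. Without loss of generality I would assume $\pi^*=\mathrm{id}$, so that $\theta^*$ lies in $\mathcal T$ or $\mathcal G$ directly and satisfies the upper and lower H\"older bounds as stated.

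The key intermediate step is to show that the lower H\"older condition, combined with row monotonicity of $\mathcal T$ or $\mathcal G$, forces rows with nearly equal row sums to be near in index. For $i<j$, monotonicity gives $\theta^*_{jk}\ge \theta^*_{ik}$ for all $k$. The stated lower H\"older bound is phrased for variation in the column index, but it transfers to row variation: in the graph case by symmetry $\theta^*_{jk}-\theta^*_{ik}=\theta^*_{kj}-\theta^*_{ki}$, and in the tournament case by skew-symmetry $\theta^*_{jk}-\theta^*_{ik}=\theta^*_{ki}-\theta^*_{kj}$, so in both cases applying the lower H\"older bound with $k$ in the role of the fixed row yields $\theta^*_{jk}-\theta^*_{ik}\ge L'|i-j|^\alpha/n^\alpha$ for every $k\notin\{i,j\}$. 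Summing over $k$ and discarding the two excluded terms (which, thanks to monotonicity and skew/ordinary symmetry, contribute nonnegatively),
\[
R_j(\theta^*)-R_i(\theta^*)\ \ge\ (n-2)L'\,\frac{|i-j|^\alpha}{n^\alpha}\ \ge\ \tfrac{L'}{2}\, n^{1-\alpha}|i-j|^\alpha
\]
for $n\ge 4$ (small $n$ are absorbed into the universal constant).

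Now I would translate the admissibility condition in the definition of $Q$. The inequality $|R_i(p\theta^*)-R_j(p\theta^*)|\le 4\sqrt{np\log n}$ is equivalent to $|R_i(\theta^*)-R_j(\theta^*)|\le 4\sqrt{n\log n/p}$, so combining with the display above,
\[
|i-j|^{2\alpha}\ \le\ \frac{64}{L'^{\,2}}\, n^{2\alpha-1}\,\frac{\log n}{p}.
\]
Next, applying the upper H\"older bound entrywise and summing over $k$,
\[
\sum_{k=1}^{n}(p\theta^*_{ik}-p\theta^*_{jk})^2\ \le\ p^2 n\,\frac{L^2|i-j|^{2\alpha}}{n^{2\alpha}}\ \le\ 64\,p\Big(\frac{L}{L'}\Big)^{\!2}\log n,
\]
uniformly over all indices $j$ satisfying the row-sum constraint. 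Summing this bound over $i\in[n]$ gives $Q(p,\theta^*)\le 64(L/L')^2\, np\log n$; plugging into Theorem \ref{main} yields the corollary.

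The main obstacle is the transfer of the lower H\"older bound from column variation to row variation while keeping the correct sign so that the telescoping into $R_j-R_i$ does not lose constants; this is what forces me to use the symmetry/skew-symmetry of $\theta^*$ and the intrinsic row monotonicity of $\mathcal T$ and $\mathcal G$. Once that step is handled, the remainder is a direct computation with the upper H\"older bound followed by invocation of Theorem \ref{main}.
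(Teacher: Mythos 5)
Your proof is correct and follows essentially the same route as the paper: assume $\pi^*=\mathrm{id}$, use the lower H\"older bound to show that $|R_i(p\theta^*)-R_j(p\theta^*)|\le 4\sqrt{np\log n}$ forces $|i-j|^\alpha/n^\alpha \lesssim \sqrt{\log n/(np)}/L'$, feed this into the upper H\"older bound to get $\sum_k(p\theta^*_{ik}-p\theta^*_{jk})^2\lesssim p(L/L')^2\log n$, and conclude via Theorem \ref{main}. The only difference is cosmetic: you spell out the column-to-row transfer of the H\"older condition via symmetry/skew-symmetry and the sign-check on the two excluded diagonal terms, which the paper states implicitly; this is a welcome clarification but not a change of approach.
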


\begin{remark}
A similar estimator (consists of a sorting step and a smoothing step) as ours was proposed in~\cite{CA14} in the symmetric model, where the authors assume that the true $\theta$ is both lower and upper Lipschitz (but not necessarily monotonic). Under an extra assumption on the sparsity of the gradient of the histogram of $\theta$, \cite[Theorem 3]{CA14}  shows that their estimator has MSE scaling like $\widetilde{O}(n^{-1})$. We obtain the same error bound without the sparsity assumption on the gradient, but under the extra assumption of monotonicity. Another point worth mentioning here is that monotonicity constraints make it possible for our estimator to be completely tuning parameter free while the estimator proposed in~\cite{CA14} has a bandwidth parameter that needs to be tuned. 

\end{remark}

\subsection{Main Contribution}

Our main contribution in this problem is to obtain a single result (Theorem~\ref{main}) encapsulating our understanding of how the MSE varies with the underlying $\theta^*.$  
The upper bound in Theorem~\ref{main}, being a sum of two terms, has a natural interpretation of being the minimax rate plus an extra term arising because of potential mistakes made in the sorting step. Operationally, Theorem~\ref{main} shows that a recipe to obtain an upper bound of the MSE at a particular $\theta^*$ is to upper bound the term $Q(p,\theta^*)$ which is a completely deterministic term. This recipe then furnishes several corollaries for submatrices of interest with the proofs of the corollaries now being very simple. Theorem~\ref{main} therefore shows adaptive rates are possible in our problem, even in the missing observations at random setting, when the true underlying matrix $\theta^*$ has additional structure. As of now, such adaptivity is not known to hold for the other competing estimator in this problem; the USVT estimator, proposed in~\cite{ChaUSVT15}.

At the later stages of preparing this document, we became aware of an independent work by~\citet{shah2016feeling} who analyze a similar estimator as ours. A couple of comments are in order to relate the results obtained here to those in~\cite{shah2016feeling}. Our estimator is not exactly the same as the CRL estimator proposed in~\cite{shah2016feeling} because they have an extra randomization step in constructing the sorting permutation. This helps in obtaining a MSE scaling like $\tilde{O}(1/n^{3/2})$ instead of $\tilde{O}(1/n)$ for the very special case when $\theta$ is constant (or nearly constant). In terms of worst case risk the performance of both estimators is the same, and equals $\tilde{O}(1/\sqrt{n})$. However, for many sub parameter spaces of interest our results provides sharper results, demonstrating the adaptive nature of our estimate. For instance, when $\theta^* \in \Theta^{(2)}$ with equal sized blocks, Theorem $2$ in~\cite{shah2016feeling} give a $\tilde{O}(1/\sqrt{n})$ upper bound while Theorem~\ref{main} attains the $\tilde{O}(1/n)$ which is minimax rate optimal for $\Theta^{(2)}$ upto log factors. The adaptation results for matrices of the Generalized Bradley Terry form or for matrices satisfying Holder smoothness conditions also do not follow from Theorem $2$ in~\cite{shah2016feeling}. Moreover, all our results are in the missing observations at random setting while \cite{shah2016feeling} work in the complete observations setting.
\subsection{Scope of future Work}\label{discuss}
An important open question is whether there exists a polynomial time estimator achieving the minimax rate of estimation of $\tilde{O}(n^{-1})$ (when there is no missing data). 
Another natural question is whether the upper bounds for our estimator given in Corollary~\ref{uppersmooth} for Lipschitz matrices and other smooth matrices are tight.  Finally, automatic adaptation properties of the Singular Value Threshholding estimator (proposed in~\cite{ChaUSVT15} and studied in~\cite{shah2016stochastically}) also deserve attention. 



\section{Proof of corollaries}\label{sec:cor}

\begin{proof}[Proof of Corollary \ref{cor:worst}]
To begin, fix $i,j\in [n]$ such that $|R_i(p\theta^*)-R_j(p\theta^*)|\le 4\sqrt{np\log n}$. Then we have
\begin{align*}
\sum_{k\in [n]}^n[p\theta^*_{ik}-p\theta^*_{jk}]^2
\le p\sum_{k\in [n]}^n|p\theta^*_{ik}-\theta^*_{jk}|
=p|R_i(p\theta^*)-R_j(p\theta^*)|
\le 4p\sqrt{np\log n}.
\end{align*}
The above bound implies $Q(p,\theta^*)\le 4(np)^{3/2}\sqrt{\log n}$, which along with Theorem~\ref{main} completes the proof of the corollary.
\end{proof}

\begin{proof}[Proof of Corollary \ref{cor:block}]
As before, it suffices to control the terms in $Q(p,\theta^*)$. To this end let $B$ be the underlying $k\times k$ matrix, i.e. $$\theta^*_{i,j}=B\Big(\Big\lceil \frac{ki}{n}\Big\rceil, \Big\lceil \frac{kj}{n}\Big\rceil\Big).$$ Now fix $i,j\in [n]$ such that $|R_i(p\:\theta^*)-R_j(p\:\theta^*)|\le 4\sqrt{np\log n}$. Then with $s:=\Big\lceil \frac{ki}{n}\Big\rceil$ and  $t:=\Big\lceil \frac{kj}{n}\Big\rceil$ we have
$$4\sqrt{np\log n}\ge |R_i(p\:\theta^*)-R_j(p\:\theta^*)|=\frac{np}{k}\Big|\sum_{r\in [k]}(B(s,r)-B(t,r))\Big|,$$
which in particular means $$\max_{r\in [k]}|B(s,r)-B(t,r)|\le \min\Big(1,4k\sqrt{\frac{\log n}{np}}\Big).$$ This gives
\begin{align*}
\|p\:\theta^*_{i.}-p\theta^*_{j.}\|^2
=&\frac{np^2}{k}\sum_{r\in [k]}(B(s,r)-B(t,r))^2\\
\leq\: &p\max_{r\in [k]}|B(s,r)-B(t,r)|\times\frac{np}{k}\Big|\sum_{r\in [k]}(B(s,r)-B(t,r))\Big|\\
\leq\: &p\min\Big(1,4k\sqrt{\frac{\log n}{np}}\Big) \times 4\sqrt{np\log n}\\
=&p\min\Big(4\sqrt{np\log n},16k\log n\Big)\le 16p\min(k,\sqrt{np})\log n.
\end{align*}
The required bound then follows from Theorem \ref{main}.
\end{proof}

\begin{proof}[Proof of corollary \ref{cor:gbm}]
To begin, note that $$L'|w_i-w_j|\le |\theta^*_{i,k}-\theta^*_{j,k}|\le L|w_i-w_j|.$$
Now fixing $i,j\in [n]$ such that $|R_i(p\:\theta^*)-R_j(p\:\theta^*)|\le 4\sqrt{np\log n}$, we have
\begin{align*}
4\sqrt{np\log n}\ge p\sum_{k\in [n]}|\theta^*_{i,k}-\theta^*_{j,k}|\ge npL'|w_i-w_j|,
\end{align*}
and so
\begin{align*}
\sum_{k\in [n]}[p\:\theta^*_{i,k}-p\:\theta^*_{j,k}]^2\le np^2 L^2(w_i-w_j)^2\le 16p\Big(\frac{L}{L'}\Big)^2\log n,
\end{align*}
which along with Theorem \ref{main} completes the proof.
\end{proof}
 
 \begin{proof}[Proof of Corollary \ref{uppersmooth}]
As before it suffices to control the terms in $Q(p,\theta^*)$. To this end, fix $i,j\in [n]$ such that
$|R_i(p\:\theta^*)-R_j(p\:\theta^*)|\le 4\sqrt{np\log n}$. We claim that
\begin{align}\label{eq:holder}
\max_{k\in [n]}|\theta^*_{i,k}-\theta^*_{j,k}|\le \Big(4\sqrt{\frac{\log n}{np}}\Big)^{\frac{\alpha}{\alpha+1}} (4L)^{\frac{1}{\alpha+1}}.
\end{align}
We first complete the proof of the corollary, deferring the proof of \eqref{eq:holder}. To this end we have
\begin{align*}
\sum_{k\in [n]}[p \: \theta^*_{i,k}-p\:\theta^*_{j,k}]^2\leq\: &p \: \max_{k\in [n]}|\theta^*_{i,k}-\theta^*_{j,k}|\times \sum_{k\in [n]}|p\:\theta^*_{i,k}-p\:\theta^*_{j,k}|\\
\leq\: & p \Big(4\sqrt{\frac{\log n}{np}}\Big)^{\frac{\alpha}{\alpha+1}} (4L)^{\frac{1}{\alpha+1}}\times 4\sqrt{np\log n}\\
=\:&16p\:\:L^{\frac{1}{\alpha+1}}\:(np)^{1-\frac{2\alpha+1}{2\alpha+2}}(\log n)^\frac{2\alpha+1}{2\alpha+2}\\
\leq\: &16p\:L^{\frac{1}{\alpha+1}}\:(np)^{1-\frac{2\alpha+1}{2\alpha+2}}\:\log n,
\end{align*}
from which the result follows on using Theorem \ref{main}.

It thus remains to complete the proof of \eqref{eq:holder}. To this end, let $\delta:=\max_{k\in [n]}|\theta^*_{i,k}-\theta^*_{j,k}|$, and let $k^\star\in [n]$ be such that $|\theta^*_{i,k^\star}-\theta^*_{j,k^\star}|=\delta$. Then setting $N:=\Big(\frac{\delta}{4L}\Big)^{1/\alpha}n$, for any $k\in [n]$ such that $|k-k^\star|\le N$ an application of triangle inequality gives
\begin{align*}
|\theta^*_{i,k}-\theta^*_{j,k}|\ge &|\theta^*_{i,k^\star}-\theta^*_{j,k^\star}|-|\theta^*_{i,k}-\theta^*_{i,k^\star}|-|\theta^*_{j,k}-\theta^*_{j,k^\star}|\\
\ge &\delta-\frac{2LN^\alpha}{n^\alpha}
=\frac{\delta}{2}.
\end{align*}
This in turn implies
\begin{align*}
4\sqrt{np\log n}\ge |R_i(p\:\theta^*)-R_j(p\:\theta^*)|\ge p \sum_{k:|k-k^\star|\le N}|\theta^*_{i,k}-\theta^*_{j,k}|\ge pN\delta=p\:\Big(\frac{\delta}{4L}\Big)^{1/\alpha}n\delta,
\end{align*}
which is same as $$\delta\le \Big(4\sqrt{\frac{\log n}{np}}\Big)^{\frac{\alpha}{\alpha+1}} (4L)^{\frac{1}{\alpha+1}}.$$
\end{proof}

 \begin{proof}[Proof of Corollary \ref{cor:lipstrict}]
As usual, fix $i,j\in [n]$ such that $|R_i(p\:\theta^*)-R_j(p\:\theta^*)|\le 4\sqrt{np\log n}$. Then we have
\begin{align*}
4\sqrt{np \log n}\ge |R_i(p\:\theta^*)-R_j(p\:\theta^*)|\ge np L'\Big(\frac{'|i-j|}{n}\Big)^\alpha,
\end{align*}
which implies
\begin{align*}
\sum_{k\in [n]}[p\:\theta^*_{i,k}-p\:\theta^*_{j,k}]^2\le np^2L^2\Big(\frac{|i-j|}{n}\Big)^{2\alpha}\le 16p\Big(\frac{L}{L'}\Big)^2\log n,
\end{align*}
from which the result follows using Theorem \ref{main}.
\end{proof}

\section{Proof of Theorem~\ref{main}}


To begin, note that the MSE of our estimator does not depend on the underlying true permutation $\pi^*$, and so 
henceforth we assume that the true underlying latent permutation $\pi^*$ is the identity permutation.
We now outline the proof of Theorem \ref{main} in several steps.

\subsection{Step 1}
Recall that our estimator is of the form $$\hat{\theta} = Proj\Big(\big(\frac{y - \frac{1}{2} J}{\hat{p}} + \frac{1}{2} J\big) \circ \hat{\sigma}\Big) \circ \hat{\sigma}^{-1}.$$ on the set $n\hat{p}\ge 1$. We now show that it is sufficient for our purposes to analyse the estimator where $\hat{p}$ is replaced by $p.$ We now state a lemma making this precise. 
\begin{lemma}\label{step0}
Suppose $$\tilde{\theta}:= Proj\Big(\big(\frac{y - \frac{1}{2} J}{p} + \frac{1}{2} J\big) \circ \hat{\sigma}\Big) \circ \hat{\sigma}^{-1}$$ be the estimator constructed assuming $p$ known. If $p \geq \frac{1}{n}$, then there exists a constant $C < \infty$ such that for all $n \geq 2$ we have
$$ \frac{1}{n^2}  \E \|\hat{\theta}-\tilde{\theta}\|^2 \leq \frac{C}{n^2 p^2}.$$
\end{lemma}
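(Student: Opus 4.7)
The plan is to show $\E\|\hat\theta - \tilde\theta\|^2 \leq C/p^2$ uniformly in $p \geq 1/n$, since the two estimators differ only in whether they rescale the centered data by $\hat p$ or $p$, and these agree well by concentration. I will split the analysis using the high-probability event $A = \{\hat p \geq p/2\}$. A first reduction: for $p \in [1/n, 2/n]$, both $\hat\theta$ and $\tilde\theta$ lie in $[0,1]^{n\times n}$, so $\|\hat\theta - \tilde\theta\|^2 \leq n^2 \leq 4/p^2$ trivially. So I may assume $p \geq 2/n$, which has the pleasant consequence that $p/2 \geq 1/n$, i.e., on $A$ we automatically have $\hat p \geq 1/n$, so both estimators are defined by the projection formula.

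On $A$, I will combine three ingredients: (i) non-expansiveness of the Euclidean projection onto the convex sets $\T$ or $\G$; (ii) invariance of the Frobenius norm under the permutation $\hat\sigma^{-1}$; and (iii) the algebraic identity $\|y - J/2\|^2 = \binom{n}{2} \hat p/2$, which holds in both the tournament and graph settings because each observed pair $(i,j)$ contributes $(y_{ij}-1/2)^2 + (y_{ji}-1/2)^2 = 1/2$ (as $y_{ij} \in \{0,1\}$), while missing pairs (filled with $1/2$) and the diagonal contribute zero. Together these yield
\[
\|\hat\theta - \tilde\theta\|^2 \;\leq\; \Big(\tfrac{1}{\hat p} - \tfrac{1}{p}\Big)^{\!2} \|y - J/2\|^2 \;=\; \tfrac{\binom{n}{2}(p-\hat p)^2}{2p^2 \hat p}.
\]
On $A$ we use $\hat p \geq p/2$ to replace the $\hat p$ in the denominator, then take expectations using $\mathrm{Var}(\hat p) = p(1-p)/\binom{n}{2}$; this yields $\E[\|\hat\theta - \tilde\theta\|^2 \mathbf{1}_A] \leq 1/p^2$.

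On the bad event $A^c$ (still with $p \geq 2/n$), I discard the projection bound and just use the trivial $\|\hat\theta - \tilde\theta\|^2 \leq n^2$, paired with a standard Chernoff estimate $\P(\hat p \leq p/2) \leq \exp\!\big(-\binom{n}{2}p/8\big)$. Using $\binom{n}{2}p \geq n^2 p/4$ together with the elementary fact $\sup_{t \geq 0} t e^{-t/32} < \infty$ applied to $t = \binom{n}{2} p$, I obtain $n^2 p^2 \exp(-\binom{n}{2}p/8) \leq C'$, i.e., $\E[\|\hat\theta - \tilde\theta\|^2 \mathbf{1}_{A^c}] \leq C'/p^2$. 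Summing the contributions from $A$, $A^c$, and the initial trivial regime $p \leq 2/n$ gives the lemma. The main subtlety is that when $p$ is near $1/n$, $\hat p$ no longer concentrates sharply around $p$, so the Chernoff argument is weak; this is exactly the regime where $n^2 \asymp 1/p^2$, so the brute-force ceiling $\|\hat\theta - \tilde\theta\|^2 \leq n^2$ already suffices, and handling $p \leq 2/n$ separately at the outset cleanly sidesteps the issue.
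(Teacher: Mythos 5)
Your proof is correct and follows essentially the same route as the paper: split on a concentration event for $\hat p$, use non-expansiveness of the convex projection together with permutation invariance of the Frobenius norm to reduce to $\big(\tfrac{1}{\hat p}-\tfrac{1}{p}\big)^2\|y-J/2\|^2$, control the good event via $\mathrm{Var}(\hat p)$, and combine the trivial bound $n^2$ with a Chernoff estimate on the bad event. Your explicit handling of the regime $p\le 2/n$ and of the fallback $\hat\theta=J/2$ when $\hat p<1/n$ is a minor tidiness improvement over the paper, whose argument implicitly assumes both estimators are given by the projection formula on the good event.
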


The proof of Lemma~\ref{step0} is given in the appendix. Lemma~\ref{step0} therefore lets us conclude 
\begin{equation*}
\frac{1}{n^2} \E \|\hat{\theta} - \theta^*\|^2 \leq \frac{C}{n^2}\Big[ \E \|\tilde{\theta} - \theta^*\|^2 + \frac{1}{ p^2}\Big].
\end{equation*}

Hence from now on, we will analyze the mean squared error of the estimator $\tilde{\theta}.$  

\subsection{Step 2} 
We now write the following inequality.
\begin{align*}
\|\tilde{\theta} - \theta^*\| =& \|Proj\Big(\big(\frac{y - \frac{1}{2} J}{p} + \frac{1}{2} J\big) \circ \hat{\sigma}\Big) \circ \hat{\sigma}^{-1} - \theta^* \circ \hat{\sigma}^{-1} + \theta^* \circ \hat{\sigma}^{-1} - \theta^*\| \\
\leq &\|Proj\Big(\big(\frac{y - \frac{1}{2} J}{p} + \frac{1}{2} J\big) \circ \hat{\sigma}\Big) \circ \hat{\sigma}^{-1} - \theta^* \circ \hat{\sigma}^{-1}\| + \|\theta^* \circ \hat{\sigma}^{-1} - \theta^*\|  \\
=& \|Proj\Big(\big(\frac{y - \frac{1}{2} J}{p} + \frac{1}{2} J\big) \circ \hat{\sigma}\Big) - \theta^*\| + \|\theta^* \circ \hat{\sigma}^{-1} - \theta^*\|. 
\end{align*}
In this step we handle the first term above. 
Fix $\theta^* \in \T \subset [0,1]^{n \times n}$ and define a random (depends on $y$) function $f_{\theta^*}: \R_{+} \rightarrow \R$ by
\begin{equation*}
f_{\theta^*}(t) = \sup_{\theta \in \T: \|\theta -\theta^*\| \leq t} \langle \big(\frac{y - \frac{1}{2} J}{p} + \frac{1}{2} J\big) \circ \hat{\sigma} \:\:-\:\: \theta^*, \theta - \theta^* \rangle
\end{equation*}
Here the $\langle A,B \rangle$ notation refers to the inner product of two matrices defined as $Trace(A^{T} B).$ The following proposition connects the term $\|Proj\Big(\big(\frac{y - \frac{1}{2} J}{p} + \frac{1}{2} J\big) \circ \hat{\sigma}\Big) - \theta^*\|$ to the function $f_{\theta^*}.$


\begin{proposition}\label{leminterm}
The function $f_{\theta^*}(t)$ defined  above is strictly concave with $f(0) = 0$ and $f_{\theta^*}$ converges to $-\infty$ as $t\rightarrow\infty$.  Denoting the unique maximizer of the function $f_{\theta^*}$ by $t^*,$ we have
$\|Proj\Big(\big(\frac{y - \frac{1}{2} J}{p} + \frac{1}{2} J\big) \circ \hat{\sigma}\Big) - \theta^*\|= t^*$.
Moreover, if $s > 0$ satisfies $f_{\theta^*}(s) \leq 0$ then
$t^* \leq s.$

\end{proposition}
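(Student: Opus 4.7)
The plan is to apply the variational characterization of least squares under a convex constraint. Write $Y := \big(\frac{y - \frac{1}{2}J}{p} + \frac{1}{2}J\big) \circ \hat{\sigma}$ and $\hat{\theta}_P := Proj(Y)$ for brevity, so that $\hat{\theta}_P$ is the unique Frobenius projection of $Y$ onto the closed convex set $\T$. I read $f_{\theta^*}(t)$ as the supremum expression penalized by $-t^2/2$ (that penalty is needed for the strict concavity and the $t\to\infty$ limit claims to go through). Concavity of the supremum part in $t$ comes from a convex combination argument: if $\theta_1, \theta_2 \in \T$ are feasible at levels $t_1, t_2$, then $\theta_\lambda := \lambda \theta_1 + (1-\lambda) \theta_2$ lies in $\T$ by convexity, satisfies $\|\theta_\lambda - \theta^*\| \leq \lambda t_1 + (1-\lambda) t_2$ by the triangle inequality, and $\langle Y - \theta^*, \cdot - \theta^*\rangle$ is linear in its argument. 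Adding the strictly concave $-t^2/2$ term yields strict concavity of $f_{\theta^*}$. The identities $f_{\theta^*}(0) = 0$ (only $\theta^*$ is feasible) and $f_{\theta^*}(t) \leq t \|Y - \theta^*\| - t^2/2 \to -\infty$ (Cauchy-Schwarz) are immediate, so a unique finite maximizer $t^*$ exists.

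The identification $t^* = \|\hat{\theta}_P - \theta^*\|$ I would obtain by interchanging the two maxima:
\[
\max_{t \geq 0} f_{\theta^*}(t) \;=\; \max_{\theta \in \T} \max_{t \geq \|\theta - \theta^*\|} \Big[\langle Y - \theta^*, \theta - \theta^*\rangle - \tfrac{t^2}{2}\Big] \;=\; \max_{\theta \in \T} \Big[\langle Y - \theta^*, \theta - \theta^*\rangle - \tfrac{1}{2}\|\theta - \theta^*\|^2\Big],
\]
since the inner $t$-max (of a function decreasing in $t$ for $t$ large enough) is attained at $t = \|\theta - \theta^*\|$. Expanding the final objective as $\tfrac{1}{2}\|Y - \theta^*\|^2 - \tfrac{1}{2}\|Y - \theta\|^2$ reveals it is uniquely maximized over the closed convex set $\T$ at $\hat{\theta}_P$, and the corresponding $t$-coordinate is $\|\hat{\theta}_P - \theta^*\|$, which must therefore be $t^*$.

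The last implication, $f_{\theta^*}(s) \leq 0 \;\Rightarrow\; t^* \leq s$ for $s > 0$, is a routine consequence of strict concavity. If $0 < s < t^*$, then writing $s = (1 - s/t^*) \cdot 0 + (s/t^*) \cdot t^*$ and using strict concavity together with $f_{\theta^*}(0) = 0$ gives
\[
f_{\theta^*}(s) \;>\; (s/t^*) \, f_{\theta^*}(t^*) \;\geq\; 0,
\]
where the second inequality uses $f_{\theta^*}(t^*) \geq f_{\theta^*}(0) = 0$ since $t^*$ is the maximizer of $f_{\theta^*}$; this contradicts the hypothesis $f_{\theta^*}(s) \leq 0$. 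I do not foresee a serious obstacle: the only subtle point is to ensure the inner supremum defining $f_{\theta^*}$ is actually attained, so that the max-swap in the second paragraph is valid with genuine maxima rather than suprema; this follows from compactness of $\T \cap \{\theta : \|\theta - \theta^*\| \leq t\}$ inside $[0,1]^{n \times n}$. Beyond that, the argument is standard Chatterjee-type bookkeeping.
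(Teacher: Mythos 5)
Your proposal is correct and follows essentially the same route as the paper's appendix proof, namely Chatterjee's variational characterization of the projection: the same convexity argument for concavity of the supremum part, the same Cauchy--Schwarz bound forcing $f_{\theta^*}(t)\to-\infty$, and the same identity $\langle Y-\theta^*,\theta-\theta^*\rangle-\tfrac{1}{2}\|\theta-\theta^*\|^2=\tfrac{1}{2}\|Y-\theta^*\|^2-\tfrac{1}{2}\|Y-\theta\|^2$ identifying the maximizer with $Proj(Y)$; your reading that the definition of $f_{\theta^*}$ must carry the $-t^2/2$ penalty is indeed what the paper intends (and needs). The only cosmetic differences are that you obtain $t^*=\|Proj(Y)-\theta^*\|$ by swapping the maxima over $t$ and $\theta$, whereas the paper routes the argument through the equality-constrained function $h(t)$ and an attainment argument on the sphere, and that you explicitly write out the strict-concavity step giving $f_{\theta^*}(s)\le 0\Rightarrow t^*\le s$, which the paper's proof leaves implicit.
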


Proposition~\ref{leminterm} is proved using a representation result for the projection of a vector onto a convex set as developed in~\cite{Chat14}. Note that this is a deterministic result and does not depend on the distributional properties of $y.$ For the sake of completeness we prove this result in the appendix. 
For a more general version of the above proposition see Lemma 3.1 in~\cite{chatterjee2015adaptive}, where the projection is onto sets which are a finite union of convex sets.
%



\subsection{Step 3}
Proposition~\ref{leminterm} reduces the problem of upper bounding $\|Proj\Big(\big(\frac{y - \frac{1}{2} J}{p} + \frac{1}{2} J\big) \circ \hat{\sigma}\Big) - \theta^*\|$ to the problem of finding a $s > 0$ such that $f_{\theta^*}(s) \leq \frac{s^2}{2}$ which in turn behooves us to find a good upper bound of $f_{\theta^*}(t)$ for any fixed $t.$ Proceeding to do this,
define the mean zero random variables $v_{ij} := y_{ij} - 1/2 - p (\theta^*_{ij} - 1/2),$ and note that  
\begin{equation*}
\big(\frac{y - \frac{1}{2} J}{p} + \frac{1}{2} J\big) \circ \hat{\sigma} = \theta^* \circ \hat{\sigma} + \frac{1}{p} v \circ \hat{\sigma}.
\end{equation*}
where $v$ is a matrix with $v_{ij}$ on the non diagonals and zero on the diagonals. This implies 
\begin{equation*}
f_{\theta^*}(t) \leq \sup_{\theta \in \T: \|\theta - \theta^*\| \leq t} \langle [\theta^* \circ \hat{\sigma}] -  \theta^*, \theta - \theta^* \rangle  + \sup_{\theta \in \T: \|\theta - \theta^*\| \leq t} \langle \frac{1}{p} v \circ \hat{\sigma}, \theta - \theta^* \rangle. 
\end{equation*}
An application of the Cauchy Schwarz Inequality to the first term on the right side of the above inequality 
now gets us the following:
\begin{equation}\label{mark1}
f_{\theta^*}(t) \leq t \|[\theta^* \circ \hat{\sigma}] -  \theta^*\|  +\frac{1}{p} M_t,\quad M_t:=  \sup_{\pi\in S_n}\sup_{\theta \in \T: \|\theta - \theta^*\| \leq t} \langle v \circ \pi, \theta - \theta^* \rangle.
\end{equation}
%
%
%
The control  on $M_{t}$ is carried out in the following lemma. The proof is done by a chaining argument and is given in the appendix. 

\begin{lemma}\label{gausup}
There exists universal positive constants $C,c$ such that for any $n \ge 2, \theta^* \in \T \subset [0,1]^{n \times n}$ we have
\begin{equation*}
\P(M_t > C (1+t)n (\log n)^2)\le e^{-c n (\log n)^2}.
\end{equation*}
\end{lemma}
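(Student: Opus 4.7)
The plan is to handle the two sources of randomness in $M_t$ separately: first control the supremum over $\theta\in \T_t := \T\cap\{\theta:\|\theta-\theta^*\|_F\leq t\}$ for a fixed permutation $\pi$ by a chaining argument, then absorb the supremum over $\pi\in S_n$ via a union bound.

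First, I would exploit that the entries of $v$ are independent, mean-zero and uniformly bounded: indeed $y_{ij}\in\{0,1/2,1\}$ and $\theta^*_{ij}\in[0,1]$, so $|v_{ij}|\leq 1$. Consequently the entries of $v\circ\pi$ remain independent, bounded and mean-zero (for any fixed $\pi$), and the centered linear process
$$Z_\pi(\theta) := \langle v\circ\pi,\theta-\theta^*\rangle,\qquad \theta\in \T_t,$$
has sub-Gaussian increments with respect to the Frobenius metric. To control $\E\sup_{\theta\in\T_t}Z_\pi(\theta)$ I would invoke classical metric entropy bounds for the class of bivariate isotonic matrices (e.g.\ Gao--Wellner, Chatterjee--Guntuboyina--Sen), noting that $\T$ is just such a class subject to a skew-symmetry constraint and an entry-wise $[0,1/2]$ bound on the upper triangular part. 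Applying Dudley's entropy integral on the localized set $\T_t$ then gives
$$\E\sup_{\theta\in\T_t}Z_\pi(\theta)\leq C(1+t)\sqrt{n}\,(\log n)^\alpha,$$
for some absolute constant $\alpha$, uniformly in $\pi$.

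Next, to pass from an in-expectation bound to a high-probability bound I would apply Talagrand's concentration inequality (Bousquet's form) for suprema of bounded empirical processes. The strong variance scale is $O(t^2)$ and individual summands are bounded by $1$, so choosing a deviation $u \asymp (1+t)n(\log n)^2$ (which, for $t\leq n$, is much larger than $\E \sup Z_\pi$ and satisfies $u\gtrsim t^2$, so that Talagrand's denominator is dominated by $u$ rather than by the variance) yields
$$\P\!\Big(\sup_{\theta\in\T_t}Z_\pi(\theta)>C(1+t)n(\log n)^2\Big)\leq \exp\!\big(-cn(\log n)^2\big),$$
uniformly in $\pi$. Finally, a union bound over $\pi\in S_n$ with $|S_n|=n!\leq e^{n\log n}$ and the elementary observation $n\log n = o(n(\log n)^2)$ gives
$$\P\!\big(M_t>C(1+t)n(\log n)^2\big)\leq n!\cdot e^{-cn(\log n)^2}\leq e^{-c'n(\log n)^2},$$
as required.

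The main technical obstacle I anticipate is obtaining a metric entropy estimate for $\T$ in the Frobenius norm that is sharp enough, in terms of both its $\epsilon$-dependence and its polylog factors, so that Dudley's integral produces an $\E\sup$ comfortably below the target $(1+t)n(\log n)^2$; the extra polylog slack is what allows Talagrand's deviation term to survive the $n!$-factor loss from the union bound over $S_n$. A secondary bookkeeping difficulty is calibrating Talagrand's inequality carefully in $t$, so that the variance parameter $t^2$ does not dominate the denominator and spoil the desired linear-in-$(1+t)$ scaling of the final bound; handling $\G$ in place of $\T$ (for the graph model) is completely analogous, with the symmetry constraint replacing the skew-symmetry constraint and the same entropy estimates applying.
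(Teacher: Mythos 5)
Your proposal follows essentially the same route as the paper: bound $\E M_{\pi,t}$ for fixed $\pi$ via Dudley chaining and the metric entropy of the bivariate isotonic class, concentrate around the mean using a bounded-differences--type inequality, and absorb the supremum over $\pi$ by a union bound over $|S_n|=n!\le e^{n\log n}$, which is negligible against $e^{-cn(\log n)^2}$. The paper uses Ledoux's concentration for convex Lipschitz functions of bounded variables (observing that $M_{\pi,t}$ is a convex, $t$-Lipschitz function of the upper-triangular entries of $v$), whereas you invoke Bousquet's form of Talagrand's inequality; these are interchangeable here, so I would not call the route genuinely different.

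One quantitative claim in your sketch is wrong, though not fatally so. The localized Dudley bound $\E\sup_{\theta\in\T_t}Z_\pi(\theta)\lesssim(1+t)\sqrt n\,(\log n)^\alpha$ is too optimistic. The metric entropy of $\T$ in Frobenius norm (Proposition~\ref{covprop}, using the Gao--Wellner / Chatterjee--Guntuboyina--Sen bounds) is $\log N(\eps,\T)\lesssim(n/\eps)^2\big(\log(n/\eps)\big)^2$. Even when restricted to $\T_t$, the $n/\eps$ singularity forces a lower cutoff $\eps\asymp 1/n$, and the entropy integral then yields $\E\sup\lesssim n(\log n)^2$ essentially independently of $t$ (indeed the Gaussian width of $\T$ is of order $n$ up to logs, not $\sqrt n$). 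This does not break your argument: with a sufficiently large constant $C$ one still has $C(1+t)n(\log n)^2\ge 2\E\sup$, so Bousquet's inequality applies with a deviation of order $(1+t)n(\log n)^2$, and the union bound over $S_n$ goes through exactly because the resulting exponent $n(\log n)^2$ exceeds $\log n!\asymp n\log n$ by a factor $\log n$. So what saves the union bound is the extra $\log n$ in the exponent, not the expectation being polynomially smaller than the target; the final lemma still holds with the correct entropy input, which is the one actually used in the paper.
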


Combining Lemma \ref{gausup}  with~\eqref{mark1} shows that with high probability as quantified in Lemma \ref{gausup} we have
\begin{equation*}
f_{\theta^*}(t) \leq t \|[\theta^* \circ \hat{\sigma}] -  \theta^*\|  + (1 + t) C \frac{n}{p} (\log n)^2.
\end{equation*}
Setting $s^2 = \max\{\|\theta^* \circ \hat{\sigma} - \theta^*\|^2, C \frac{n}{p} (\log n)^2\}$ it can be checked that $f_{\theta^*}(s) \leq s^2/2.$ An application of Proposition~\ref{leminterm} then gives us $\|\tilde{\theta} - \theta^*\|^2 \leq s^2$ with high probability, which in turn gives the following 
proposition: 
\begin{proposition}\label{key}
There exists a positive constant $C$ such that for all $n\ge 2$ and $p\ge \frac{2}{n}$ we have
\begin{equation*}
\E\frac{1}{n^2} \|\tilde{\theta} - \theta^*\|^2 \leq C\Big[ \frac{(\log n)^2}{np} + \frac{1}{n^2}\:\E \|(\theta^* \circ \hat{\sigma}) - \theta^*\|^2\Big].
\end{equation*}
\end{proposition}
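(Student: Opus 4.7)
The plan is to convert the high-probability pointwise bound on $f_{\theta^*}$ derived in Step~3 into a bound on $\|\tilde\theta-\theta^*\|$ via Proposition~\ref{leminterm}, and then pass to expectation by handling the residual exceptional event with a trivial deterministic bound.

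Write $A:=\|\theta^*\circ\hat\sigma-\theta^*\|$ and $B:=C\,n(\log n)^2/p$. By Lemma~\ref{gausup} combined with~\eqref{mark1}, there is a high-probability event $\mathcal{E}$, with $\mathbb{P}(\mathcal{E}^c)\le e^{-cn(\log n)^2}$, on which
\[
f_{\theta^*}(t)\;\le\;tA+(1+t)B\qquad\text{for all relevant }t.
\]
Following the recipe outlined in the paragraph just before the statement of Proposition~\ref{key}, I would choose $s^2:=K\max(A^2,B)$ for a sufficiently large absolute constant $K$, verify the condition $f_{\theta^*}(s)\le s^2/2$ by elementary algebra, and invoke Proposition~\ref{leminterm} to conclude
\[
\|\tilde\theta-\theta^*\|^2\;\le\;s^2\;\le\;K(A^2+B)\qquad\text{on }\mathcal{E}.
\]

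For the complementary event I would use the deterministic bound $\|\tilde\theta-\theta^*\|^2\le n^2$: the projection onto $\mathcal{T}$ (or $\mathcal{G}$) produces a matrix in $[0,1]^{n\times n}$ and $\theta^*\in[0,1]^{n\times n}$. Hence
\[
\mathbb{E}\bigl[\|\tilde\theta-\theta^*\|^2\,\mathbf{1}_{\mathcal{E}^c}\bigr]\;\le\;n^2\,e^{-cn(\log n)^2},
\]
and, after dividing by $n^2$, this residual is dominated by $(\log n)^2/(np)$ for $n\ge 2$ and $p\ge 2/n$, so it is absorbed into the universal constant. Combining the two contributions gives
\[
\frac{1}{n^2}\mathbb{E}\|\tilde\theta-\theta^*\|^2\;\le\;\frac{K}{n^2}\mathbb{E}[A^2]+\frac{KB}{n^2}+e^{-cn(\log n)^2}\;\le\;C\!\left[\frac{1}{n^2}\mathbb{E}\|\theta^*\circ\hat\sigma-\theta^*\|^2+\frac{(\log n)^2}{np}\right],
\]
as claimed.

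The main obstacle, namely the uniform control of the empirical process $M_t$ over all permutations $\pi\in S_n$ and over Frobenius balls in $\mathcal{T}$, is already packaged into Lemma~\ref{gausup}; once that uniform chaining bound is in hand, the remainder of the argument is essentially a deterministic algebraic verification of the inequality $sA+(1+s)B\le s^2/2$ for the chosen $s$, together with a routine truncation step to take expectations.
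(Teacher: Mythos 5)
Your proposal follows exactly the paper's route: use Lemma~\ref{gausup} together with~\eqref{mark1} to bound $f_{\theta^*}$ on a high-probability event, pick $s^2$ on the order of $\max\bigl(\|\theta^*\circ\hat\sigma-\theta^*\|^2,\, n(\log n)^2/p\bigr)$, invoke Proposition~\ref{leminterm} to transfer this to $\|\tilde\theta-\theta^*\|^2\le s^2$, and then integrate, absorbing the exceptional event via the trivial bound $\|\tilde\theta-\theta^*\|^2\le n^2$. The only difference is cosmetic: you spell out the truncation step that the paper leaves implicit, so this is essentially the same proof.
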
 
The above proposition upper bounds the risk of our estimator by a sum of two terms. The first term scaling like $(\log n)^2/np$ term is essentially the minimax rate of our problem (upto logarithmic factors) which is attained by the global least squares estimate (c.f. \cite[Theorem 5]{shah2016stochastically}). The second term is the excess risk of our estimator as compared to the risk of the global least squares estimate. Thus it suffices to focus on controlling this excess risk term, 
which measures how much the sorting permutation $\hat{\sigma}$ changes $\theta^*$.

\subsection{Step 4}
In this step we investigate the excess risk term $\frac{1}{(np)^2 } \|p\:\theta^* \circ \hat{\sigma} - p\:\theta^*\|^2.$ Analyzing this term is one of the key contributions of this paper. We first prove the following lemma concerning the behavior of $\hat{\sigma}$. Recall that for any $\theta \in \Theta$ we denote $R_i(\theta):=\sum_{j=1}^n\theta_{ij}$ to be the $i$th row sum of the matrix $\theta.$
\begin{lemma}\label{thmperm}
Let $p > 2 \log n/n.$ Setting $t_n:=2\sqrt{np\log n}$ we have 
\begin{align}\label{eq:thmperm}
&\P\Big(\max_{i\in [n]}|R_i(p\:\theta^*)-R_{\hat{\sigma}(i)}(p\:\theta^*)|>2t_n\Big)\le 2n^{-1},
\end{align}
\end{lemma}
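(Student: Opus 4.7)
The plan is to reduce the sorting error to a concentration bound on the row sums $r_i = \sum_{j \in [n]} y_{ij}$ of the filled data matrix. Each filled entry $y_{ij}$ with $i \neq j$ takes value $1, 0, 1/2$ with probabilities $p\theta^*_{ij}, p(1-\theta^*_{ij}), 1-p$, so $\E[r_i] = R_i(p\theta^*) + (n-1)(1-p)/2$. The shift $(n-1)(1-p)/2$ is the same for every $i$, so the sorting permutation $\hat{\sigma}$ obtained from $(r_1,\dots,r_n)$ is identical to the one obtained from the centered sums $r_i - (n-1)(1-p)/2$, whose means are $\mu_i := R_i(p\theta^*)$. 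Since the true permutation $\pi^*$ has been taken to be the identity, a short direct calculation using the defining monotonicity of $\T$ (respectively $\G$) yields $\mu_1 \leq \mu_2 \leq \cdots \leq \mu_n$.

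The combinatorial core is a \emph{sorting stability} argument. Let $E := \{\max_{i \in [n]} |r_i - \E[r_i]| \leq t_n\}$ and, on $E$, let $b_i := r_{\hat{\sigma}(i)}$ denote the $i$-th order statistic of $(r_1,\dots,r_n)$. Since $(\mu_j)$ is monotone, each level set $\{j : \mu_j \leq c\}$ is downward closed, i.e.\ of the form $\{1, 2, \ldots, k(c)\}$. There are at least $i$ indices $j$ with $r_j \leq b_i$, and on $E$ each of these satisfies $\mu_j \leq b_i + t_n$; monotonicity then forces $\mu_i \leq b_i + t_n$. The mirror-image argument applied to the upper level set $\{j:\mu_j \geq b_i - t_n\}$ gives $\mu_i \geq b_i - t_n$. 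Combining with $|r_{\hat{\sigma}(i)} - \mu_{\hat{\sigma}(i)}| \leq t_n$ and the triangle inequality, $|\mu_i - \mu_{\hat{\sigma}(i)}| \leq 2t_n$, which is precisely the event inside the probability in~\eqref{eq:thmperm}.

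It remains to bound $\P(E^c)$. Because the skew-symmetric (resp.\ symmetric) reflection construction makes each upper-triangular observation appear at most once in any fixed row sum, $r_i$ is a sum of $n-1$ independent random variables bounded in $[0,1]$. A direct calculation gives $\mathrm{Var}(y_{ij}) = p^2 \theta^*_{ij}(1-\theta^*_{ij}) + p(1-p)/4 \leq p/4$, so $\mathrm{Var}(r_i) \leq np/4$. Bernstein's inequality then yields
\begin{equation*}
\P\bigl(|r_i - \E[r_i]| > t_n\bigr) \leq 2\exp\Bigl(-\frac{t_n^2/2}{np/4 + t_n/3}\Bigr).
\end{equation*}
With $t_n = 2\sqrt{np\log n}$ and the assumption $p \geq 2\log n/n$ (so that $\sqrt{np \log n} \leq np/\sqrt{2}$), the denominator is bounded by a small constant multiple of $np$, and a short computation shows the exponent is at most $-2\log n$. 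Thus $\P(|r_i - \E[r_i]|>t_n)\leq 2n^{-2}$, and a union bound over the $n$ rows gives $\P(E^c)\leq 2n^{-1}$.

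There is no serious obstacle: the lemma is essentially the combination of a Bernstein bound with the stability of order statistics under small additive perturbations. The only place requiring care is tracking constants tightly enough in Bernstein to land at the advertised bound of $2/n$, and for that one must exploit the small per-term variance $p/4$ rather than relying on the trivial Hoeffding bound, which would only suffice when $p$ is of constant order.
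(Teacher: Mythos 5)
Your proof is correct and follows essentially the same route as the paper: Bernstein's inequality (exploiting the per-entry variance $\tfrac{p(1-p)}{4}+p^2\theta_{ij}(1-\theta_{ij})\le p/4$) plus a union bound establishes the event $\max_{i\in[n]}|r_i-\E r_i|\le t_n$ with probability at least $1-2n^{-1}$, and on that event a deterministic argument based on monotonicity of the population row sums forces $|R_i(p\:\theta^*)-R_{\hat{\sigma}(i)}(p\:\theta^*)|\le 2t_n$ for every $i$. Your order-statistic counting step (at least $i$ values below and $n-i+1$ values above the $i$-th order statistic, hence the $i$-th sorted observed sum is within $t_n$ of $\mu_i$) is a streamlined rendering of the paper's two-case disjoint-interval/pigeonhole contradiction, but the underlying idea is identical.
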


\begin{proof}
Define the vector of row sums of the data matrix $y$ as $r = (r_1,\dots,r_n)$ where $r_i = \sum_{j = 1}^{n} y _{ij}$ for all $1 \leq i \leq n.$ Setting 
\begin{equation}\label{event}
A_n:=  \Big\{\max_{i\in [n]}\Big|r_i-R_i(p\:\theta^*)-\frac{(n-1)(1 - p)}{2}\Big|\le t_n\Big\}
\end{equation} 
we will first show that 
\begin{align}\label{eq:prob_bound}
\P(A_n^c)\le 2n^{-1}.
\end{align} With $$\sigma_i^2:=\sum_{j=1}^n Var(y_{ij})=\frac{(n-1)p(1-p)}{4}+p^2\sum_{j=1}^n\theta_{ij}(1-\theta_{ij})\le \frac{np}{2},$$ 
we have
\begin{align*}
\frac{t_n^2}{2\sigma_i^2}\ge 4\log n,\quad 
\frac{3t_n^2}{2t_n}=\frac{3t_n}{2}\ge 3\sqrt{np\log n}\ge 4\log n.
\end{align*}
Thus an application of Bernstein's inequality gives
$$\P(|r_i-R_i(p\:\theta^*)-\frac{(n-1)(1 - p)}{2}|> t_n)\le 2\text{exp}\Big\{-\frac{t_n^2/2}{\sigma_i^2+t_n/3}\Big\}\le 2e^{-2\log n}=2n^{-2}.$$
A union bound then proves \eqref{eq:prob_bound}.

It thus suffices to show that after conditioning on the event $A_n$ we have for all $1 \leq i \leq n,$
\begin{align*}\label{eq:argument_needed}
|R_i(p\:\theta^*)-R_{\hat{\sigma}(i)}(p\:\theta^*)|\le  2t_n.
\end{align*}
 For this, setting $\hat{\sigma}(i)=j$, we split the proof into the following cases:
\begin{itemize}
\item{ $\hat{\sigma}(i)<i$}

In this case we will show that

$$|R_i(p\:\theta^*)-R_{\hat{\sigma}(i)}(p\:\theta^*)| = R_i(p\:\theta^*)-R_{\hat{\sigma}(i)}(p\:\theta^*) \le 2t_n.$$
If not, then we have $R_i(p\:\theta^*)+\frac{(n-1)(1 - p)}{2}-t_n > R_j(p\:\theta^*)+\frac{(n-1)(1 - p)}{2}+t_n$, and so the intervals  $[R_i(p\:\theta^*)+\frac{(n-1)(1 - p)}{2}- t_n,R_i(p\:\theta^*)+\frac{(n-1)(1 - p)}{2}+ t_n]$ and $[R_j(p\:\theta^*)+\frac{(n-1)(1 - p)}{2}- t_n,R_j(p\:\theta^*)+\frac{(n-1)(1 - p)}{2}+ t_n]$ are disjoint, implying $r_j < r_i$ and thus giving $i=\hat{\sigma}^{-1}(j)<\hat{\sigma}^{-1}(i)$. Now, for any $k>i$ we have $R_k(p\:\theta^*)-t_n\ge R_i(p\:\theta^*)-t_n$ by monotonicity.
 This implies that the intervals $[R_k(p\:\theta^*)+\frac{(n-1)(1 - p)}{2}- t_n,R_k(p\:\theta^*)+\frac{(n-1)(1 - p)}{2}+ t_n]$ and $[R_j(p\:\theta^*)+\frac{(n-1)(1 - p)}{2}- t_n,R_j(p\:\theta^*)+\frac{(n-1)(1 - p)}{2}+ t_n]$ are disjoint, and so
 $r_k> r_j$ which gives $\hat{\sigma}^{-1}(k)>\hat{\sigma}^{-1}(j)=i$. Since this holds for every $k>i$, the permutation $\hat{\sigma}^{-1}$ maps the set $\{i,i+1,\cdots,n\}$ to a subset of $\{i+1,\cdots,n\}$, which is impossible.

%

\item{$ \hat{\sigma}(i)\ge i$}

In this case  we will again show that
$$|R_i(p\:\theta^*)-R_{\hat{\sigma}(i)}(p\:\theta^*)| = R_{\hat{\sigma}(i)}(p\:\theta^*) - R_i(p\:\theta^*) \le 2t_n.$$
If this does not hold, with $j:=\hat{\sigma}(i)$ we have $R_j(p\:\theta^*)+\frac{(n-1)(1 - p)}{2}-t_n>R_i(p\:\theta^*)+\frac{(n-1)(1 - p)}{2}+t_n$. Consequently the intervals $[R_i(p\:\theta^*)+\frac{(n-1)(1 - p)}{2}- t_n,R_i(p\:\theta^*)+\frac{(n-1)(1 - p)}{2}+ t_n]$ and $[R_j(p\:\theta^*)+\frac{(n-1)(1 - p)}{2}- t_n,R_j(p\:\theta^*)+\frac{(n-1)(1 - p)}{2}+ t_n]$ are disjoint, and so $r_j>r_i$. By construction of $\hat{\sigma}$ we have $i=\hat{\sigma}^{-1}(j)>\hat{\sigma}^{-1}(i)$. Finally for any $k<i$ we have $$R_k(p\:\theta^*)+\frac{(n-1)(1 - p)}{2}+t_n\le R_i(p\:\theta^*)+t_n\le R_j(p\:\theta^*)+\frac{(n-1)(1 - p)}{2}-t_n,$$ and so the intervals $[R_k(p\:\theta^*)+\frac{(n-1)(1 - p)}{2}-t_n,R_k(p\:\theta^*)+\frac{(n-1)(1 - p)}{2}+t_n]$ and $[R_j(p\:\theta^*)+\frac{(n-1)(1 - p)}{2}-t_n,R_j(p\:\theta^*)+\frac{(n-1)(1 - p)}{2}+t_n]$ are disjoint as well. This  gives $r_k< r_j$, and consequently we have $\hat{\sigma}^{-1}(k)<\sigma^{-1}(j)=i$. Thus the permutation $\hat{\sigma}^{-1}$ maps the set $\{1,2,\cdots,i\}$ to a subset of $\{1,2,\cdots,i-1\}$, a contradiction.

\end{itemize}
\end{proof}

\begin{remark}
The ideal case here is when the permutation $\hat{\sigma}$ is close to the identity permutation. In general though, $\hat{\sigma}$ need not be close to the identity permutation. For example, when $\theta^*$ is a constant matrix, $\hat{\sigma}$ is close to a uniformly random permutation. But Lemma~\ref{key} shows that $R_{\hat{\sigma}(i)}$ and $R_i$ are always close irrespective of what $\theta^*$ is. For instance, when $p = 1$ we have $|R_{\hat{\sigma}} - R_i| = \tilde{O}(\sqrt{n})$  even though both $R_{\hat{\sigma}}$ and $R_i$ could be $O(n).$ 

If however the row sums $R_i(\theta^*)$ are strictly increasing, one immediately gets concentration of $\hat{\sigma}$ towards identity. In particular when $p = 1,$ if $\min_{i\in [n]}|R_i(\theta^*)-R_{i-1}(\theta^*)|$ is uniformly bounded away from $0$, then  Lemma \ref{thmperm} shows that high probability 
$$\max_{i\in [n]}|i-{\hat{\sigma}(i)}|=O(\sqrt{n\log n}).$$
Of course, if the row sums are not increasing, no concentration of $\hat{\sigma}$ towards identity is expected. 
\end{remark}

\begin{remark}
We also note that both the bounds above are adaptive in terms of sparsity of the underlying graph. For e.g. if the entries of the matrix $\theta$ are mostly $0$ or small, the row sums $R_i(\theta)$ will be small as well, thus giving a better bound. 
\end{remark}


Now we can now control the excess risk term $\frac{1}{n^2 p^2} \|p \theta^* \circ \hat{\sigma} - p \theta^*\|^2.$ This is done as follows.

%
Using Lemma~\ref{thmperm} we have
$$\P(B_n^{c})\le 2n^{-1},\quad B_n:=  \{\max_{i\in [n]}|R_i(p\:\theta^*)-R_{\hat{\sigma}(i)}(p\:\theta^*)|\le 4\sqrt{np\log n}\}.$$
This gives
\begin{align*}&\E \sum_{i,j=1}^n[p\:\theta^*_{ij}-p\:\theta^*_{\hat{\sigma}(i),\hat{\sigma}(j)}]^2\\
=&\E \sum_{i,j=1}^n[p\:\theta^*_{ij}-p\:\theta^*_{\hat{\sigma}(i),\hat{\sigma}(j)}]^21_{B_n^c}+\E \sum_{i,j=1}^n[p\:\theta^*_{ij}-p\:\theta^*_{\hat{\sigma}(i),\hat{\sigma}(j)}]^21_{B_n}\\
\le &n^2p^2\P(B_n^c)+\E\sum_{i,j=1}^n[p\:\theta^*_{ij}-p\:\theta^*_{\hat{\sigma}(i),\hat{\sigma}(j)}]^21_{B_n}\\
\le &2np^2+2\E \sum_{i,j=1}^n[p\:\theta^*_{ij}-p\:\theta^*_{\hat{\sigma}(i),j}]^21_{B_n}+2\E\sum_{i,j=1}^n[p\:\theta^*_{\hat{\sigma}(i),j}-p\:\theta^*_{\hat{\sigma}(i),\hat{\sigma}(j)}]^21_{B_n}\\
\le &2np^2+4Q(p,\theta^*).
\end{align*}
The last display along with Proposition~\ref{key} completes the proof of Theorem~\ref{main}.

\subsection{Proof of Theorem~\ref{minimax1}}\label{Minimax}

We need to use the following version of Gilbert Varshamov coding lemma (\cite{varshamov1957estimate}). The proof of this lemma is provided in subsection~\ref{gvproof}.
\begin{lemma}\label{gv}[Gilbert-Varshamov]
Fix any positive integer $d.$ Let $$\H_{1/2} = \{v \in \{0,1\}^d: \sum_{i = 1}^{d} v_i = d/2\}.$$ There exists a subset $\F \subset \H_{1/2}$ with 
\begin{equation}\label{cardinality}
|\F| \geq \exp(d/32)
\end{equation}
such that for any $w \neq w^{'} \in W$ we have 
\begin{equation}\label{hamming}
\frac{d}{2} \geq H(w,w^{'}) \geq \frac{d}{8}
\end{equation}
where $H$ refers to the Hamming distance between any two points of the hypercube.
\end{lemma}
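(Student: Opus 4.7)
The main subtlety is that the lower bound $H(w,w') \geq d/8$ is a standard Gilbert--Varshamov (GV) packing condition, but the upper bound $H(w,w') \leq d/2$ is unusual: two uniformly chosen elements of $\H_{1/2}$ have mean Hamming distance exactly $d/2$, and in fact $H(w, \mathbf{1}-w) = d$ for any $w \in \H_{1/2}$, so the upper bound is genuinely restrictive. My plan is to enforce the upper bound geometrically by restricting attention to a Hamming ball, and then apply a greedy GV argument inside that ball to enforce the lower bound.

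Concretely, fix an arbitrary center $w_0 \in \H_{1/2}$ (say $w_0 = (1,\dots,1,0,\dots,0)$) and define
\[
\H^* := \{ w \in \H_{1/2} : H(w, w_0) \leq d/4 \}.
\]
The triangle inequality immediately gives $H(w,w') \leq H(w,w_0)+H(w_0,w') \leq d/2$ for all $w,w' \in \H^*$, so any subset of $\H^*$ automatically satisfies the required upper bound. Next, construct $\F \subseteq \H^*$ by the standard greedy procedure: pick any element of $\H^*$, add it to $\F$, delete the open Hamming ball of radius $d/8$ around it from $\H^*$, and repeat until $\H^*$ is exhausted. This yields $\F$ with pairwise Hamming distance at least $d/8$ and cardinality at least $|\H^*|/M$, where $M := \max_{w \in \H_{1/2}} |B(w,d/8) \cap \H_{1/2}|$.

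The remaining step is the counting. By the obvious bijection (any $w' \in \H_{1/2}$ with $H(w,w')=2k$ is obtained from $w$ by flipping $k$ of its ones to zero and $k$ of its zeros to one), one has
\[
|B(w,r) \cap \H_{1/2}| \;=\; \sum_{k=0}^{\lfloor r/2 \rfloor} \binom{d/2}{k}^2
\]
for every $w \in \H_{1/2}$. Thus $|\H^*| \geq \binom{d/2}{d/8}^2$ (keeping only the largest term) and $M \leq (1+d/16)\binom{d/2}{d/16}^2$ (bounding by the number of terms times the largest). Using the elementary ratio identity $\binom{d/2}{k+1}/\binom{d/2}{k} = (d/2-k)/(k+1)$, every factor in $\binom{d/2}{d/8}/\binom{d/2}{d/16}$ is at least $3$ (since $k \le d/8$ implies $(d/2-k)/(k+1) \geq 3$), giving $\binom{d/2}{d/8}/\binom{d/2}{d/16} \geq 3^{d/16}$. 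Therefore
\[
|\F| \;\geq\; \frac{|\H^*|}{M} \;\geq\; \frac{3^{d/8}}{1+d/16} \;\geq\; e^{d/32}
\]
for all $d$ large enough, and the finitely many small $d$ can be handled by taking a suitable constant in the statement (or verifying by hand, since $e^{d/32}$ is a small constant there).

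The main obstacle is precisely the coexistence of the two distance constraints: a direct greedy argument on $\H_{1/2}$ gives only the lower bound, while restricting to a small ball $\H^*$ loses an exponential factor in the available volume. The reason the argument still succeeds is that the entropy of the ball of radius $d/8$ is strictly smaller than that of the ball of radius $d/4$, so the packing ratio $|\H^*|/M$ grows exponentially fast in $d$ with a rate comfortably exceeding $1/32$.
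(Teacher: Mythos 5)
Your proof is correct, but it takes a genuinely different route from the paper's. You enforce the upper bound $H(w,w')\le d/2$ geometrically, by restricting to the Hamming ball $\H^*$ of radius $d/4$ around a fixed $w_0\in\H_{1/2}$ and invoking the triangle inequality, and then you run a greedy packing of radius $d/8$ inside $\H^*$, with the cardinality bound coming from explicit binomial-coefficient counting (the identity $|B(w,r)\cap\H_{1/2}|=\sum_{k\le \lfloor r/2\rfloor}\binom{d/2}{k}^2$ plus a ratio-of-binomials estimate). The paper instead makes the upper bound structural via a product construction: it fixes the last $d/2$ coordinates to a constant balanced pattern and varies only the first $d/2$ coordinates among balanced vectors of length $d/2$, so pairwise distances cannot exceed $d/2$; the packing in that half-length slice is then obtained from a maximal-packing (covering/volume) argument, with the ball-to-space ratio estimated probabilistically as $\P(B\le k/2)/\P(B=k)$ for $B\sim Bin(2k,1/2)$ via Hoeffding and Stirling. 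Your version is more elementary and self-contained, and it actually delivers the separation $d/8$ stated in the lemma (the paper's construction as written only yields $d/16$, an internal constant slip there), at the cost of slightly heavier hand counting; the paper's version buys a trivial upper bound and a cleaner asymptotic estimate. Two small points to tidy in yours: the assertion that $k\le d/8$ gives $(d/2-k)/(k+1)\ge 3$ fails at $k=d/8$ exactly, but your product only runs over $k\le d/8-1$, where $(d/2-k)/(k+1)\ge (3d/8+1)/(d/8)>3$, so the bound $3^{d/16}$ stands; and you should state the divisibility convention (e.g.\ $16\mid d$, or insert floors), which is the same kind of housekeeping the paper dispatches with ``assume $4$ divides $d$.'' Note also that $3^{d/8}/(1+d/16)\ge e^{d/32}$ in fact holds for all such $d$, since $1+d/16\le e^{d/16}$ and $\ln 3/8-1/16>1/32$, so you do not even need the ``$d$ large'' caveat.
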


We are now ready to prove Theorem~\ref{minimax1}.
\begin{proof}[Proof of Theorem~\ref{minimax1}]
We are going to prove this lower bound for the symmetric model and a similar proof can be constructed for the skew symmetric model. For any subset $S \subset \{1,\dots,d\},$ define a matrix $\theta^{S} \subset \Theta^{(2)}$ as follows whenever $i \neq j,$
\begin{equation*}
\theta^{S}_{ij}=\begin{cases} 0.5 &\mbox{if } i \in S, j \in S \\ 
0.5 + \frac{c}{\sqrt{np}} & \mbox{if } i \in S, j \notin S \\ 
0.5 +\frac{c}{\sqrt{np}} & \mbox{if } i \notin S, j \in S \\ 
0.5 + 2\frac{c}{\sqrt{np}} & \mbox{if } i \notin S, j \notin S
\end{cases}.
\end{equation*}

Take two different subsets $S,S^{'}$ of $\{1,\dots,d\}.$ Denote by $S \Delta S^{'}$ to be the symmetric difference of the two sets. Note that whenever $i \in S \Delta S^{'}$ and $j$ does not belong to $S \Delta S^{'}$ or vice versa, we have
\begin{equation*}
(\theta^{S}_{ij} - \theta^{S^{'}}_{ij})^2 \geq \frac{c^2}{np}.
\end{equation*}
Therefore we have
\begin{equation}\label{sep1}
\sum_{i = 1}^{n} \sum_{j = 1}^{n} (\theta^{S}_{ij} - \theta^{S^{'}}_{ij})^2 \geq 2 \frac{c^2}{np} |S \Delta S^{'}| |n - S \Delta S^{'}|.
\end{equation}

Now we apply Lemma~\ref{gv} to extract a finite subset $\F \subset \H_{1/2}$ satisfying~\eqref{cardinality} and~\eqref{hamming}. Note that a vector in $\{0,1\}^d$ can be thought of as a subset of $\{1,2,\cdots,d\}$ by considering the indices which equal $1$. With this identification, consider the set of matrices 
\begin{equation*}
W = \{\theta \in \Theta: \theta = \theta^{S} \:\:\text{for some}\:\: S \in {\F}.\}
\end{equation*}
Hence the cardinality $|S \Delta S^{'}|$ is exactly the Hamming distance between the corresponding binary strings.

Hence we can now apply~\eqref{hamming} to~\eqref{sep1} to get the following bound for any $\theta \neq \theta^{'}$ belonging to $W$:
\begin{equation}\label{sep2}
\sum_{i = 1}^{n} \sum_{j = 1}^{n} (\theta_{ij} - \theta^{'}_{ij})^2 \geq \frac{c^2 n}{8p}. 
\end{equation}

This implies that $W$ is a packing set of radius $c \sqrt{n/8p}$ with cardinality atleast $\exp(n/32).$ Proceeding to bound Kulback Leibler divergences, let the distribution of $y$ and $y_{ij}$ under $\theta$ be denoted by $P_{\theta}$ and $P^{(ij)}_{\theta}$ respectively. Choosing $c$ small ensures that $\theta^{'}_{ij},\theta_{ij}\in [0.4,0.6]$, which in turn gives
\begin{equation}\label{bernkl}
D(P^{(ij)}_{\theta},P^{(ij)}_{\theta^{'}}) = p\:D(Bern(\theta_{ij}),Bern(\theta^{'}_{ij})) = p \:\int_{\theta^{'}_{ij}}^{\theta_{ij}} \frac{\theta_{ij} - x}{x(1 - x)} dx \leq  7  \|\theta_{ij} - \theta^{'}_{ij}\|^2.
\end{equation} 
The above inequality implies $\max_{\theta \neq \theta^{'}} D(P_{\theta}, P_{\theta^{'}}) \leq 7 \max_{\theta \neq \theta^{'}} \|\theta - \theta^{'}\|^2 \leq 28 \frac{n}{p} c^2$ because each entry of $\theta - \theta^{'}$ is bounded in magnitude by $2 c/\sqrt{np}.$
A standard application of Fano's lemma (see Chapter 13 in~\cite{duchi2016lecture}) with $W$ as our packing set and using~\eqref{sep2} and~\eqref{bernkl}, we obtain a minimax lower bound
\begin{equation*}
\inf_{\tilde{\theta}} \sup_{\theta \in \Theta^{(2)}} \frac{1}{n^2} \sum_{i = 1}^{n} \sum_{j = 1}^{n} \E (\tilde{\theta}_{ij} - \theta_{ij})^2 \geq \frac{c^2}{8 np} \big(1 - 32 \frac{28 c^2 n + \log 2}{n}\big).
\end{equation*}

Choosing $c$ appropriately small enough small enough finishes the proof of the theorem. 
\end{proof}

\section{Acknowledgements}
We want to thank Bodhisattva Sen for introducing us to this problem, and for his helpful comments and suggestions.

\bibliographystyle{plainnat}
\bibliography{perm_iso_ref_rev}

\section{Appendix}\label{appendix}

\subsection{Proof of Lemma~\ref{step0}}
\begin{proof}
To begin, note that 
$\|\hat{\theta} - \tilde{\theta}\|^2 \leq n^2$, which gives 
\begin{align}\label{eq:chernoff}
\E\|\hat{\theta}-\tilde{\theta}\|^2 \leq \:&n^2\:\P(\hat{p}\notin [p/2,2p])+\E \big[\Big(\frac{1}{\hat{p}}-\frac{1}{p}\Big)^2\sum_{i,j=1}^n\Big(y_{ij}-\frac{1}{2}\Big)^2 \I\Big\{ \frac{p}{2}\le \hat{p}\le 2p\Big\}\big]
\end{align}
where the last term in the RHS of \eqref{eq:chernoff} can be bounded by
\begin{align*}
\frac{4}{p^4}\E \big[(\hat{p}-p)^2\sum_{i,j=1}^n\Big(y_{ij}-\frac{1}{2}\Big)^2 \I\Big\{ \frac{p}{2}\le \hat{p}\leq 2p\Big\}\big]
=& \frac{n^2}{p^4}\E \big[\hat{p}\:(\hat{p}-p)^2 \I\Big\{ \frac{p}{2}\leq \hat{p} \leq 2p \Big\}\big]\\
\le &\frac{2n^2}{p^3} \E (\hat{p}-p)^2
\le \frac{8}{p^2}.
\end{align*}
For the first two terms in the RHS of \eqref{eq:chernoff}, using the multiplicative form of Chernoff's bound we have
$$\P\Big(\hat{p}< \frac{p}{2}\Big)\le e^{-\frac{n(n-1)p}{16}}\le e^{-\frac{n-1}{8}},\quad \P\Big(\hat{p}>2p\Big)\le e^{-\frac{n(n-1)p}{6}}\le e^{-\frac{n-1}{3}}.$$
Thus plugging in the estimates in \eqref{eq:chernoff} we get
$$\frac{1}{n^2}\E \|\hat{\theta}-\tilde{\theta}\|^2 \leq n^2(e^{-\frac{n-1}{8}}+e^{-\frac{n-1}{3}})+\frac{8}{n^2p^2},$$
from which the result follows.
\end{proof}

\subsection{Proof of Lemma~\ref{gausup}}


In order to prove the above lemma, we will need to make use of the following two standard results. We first set up some notations. For any set $A \subset \R^n$ define its covering number at radius $\epsilon > 0$ to be the minimum number of Euclidean balls of radius $\epsilon$ centred inside $A$ such that the union of the balls is a superset of $A.$ Denote this covering number by $N(A,\epsilon)$ where the Euclidean metric used is implicit.



Let us now define the space of matrices which are non decreasing in both rows and columns with entries between $0$ and $1$ as
\begin{equation*}\label{monomat}
\M_{[0,1]} = \{\theta \in [0,1]^{n \times n}: \theta_{ij} \leq \theta_{kl}\:\:\text{iff}\:\:i \leq k\:\:\text{and}\:\:j \leq l\}.
\end{equation*}

Estimates of the metric entropy of $\M_{[0,1]}$ are available in the literature. The next proposition shows that these metric entropy bounds of $\M_{[0,1]}$ can be used to derive a similar bound for the covering number of $\T.$
\begin{proposition}\label{covprop}
Fix a positive integer $n.$ We have the covering number inequality for any $\eps > 0,$
\begin{equation*}
\log N(\eps,\T,\|.\|) \leq C\:\Big(\frac{n}{\eps}\Big)^2\:\:\Big[\log \Big(\frac{n}{\eps}\Big)\Big]^2.
\end{equation*}
The same upper bound for the log covering number holds when $\T$ is replaced by $\G.$
\end{proposition}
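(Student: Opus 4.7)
The strategy I would pursue is to reduce the covering bound for $\T$ and $\G$ to the metric entropy bound for $\M_{[0,1]}$, which is what the literature on bivariate isotonic regression provides and which (in its standard form) yields the right-hand side of the stated inequality. The hint in the paragraph preceding the proposition essentially prescribes this reduction.

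For $\G$ I would first build an extension map $\Phi:\G\to\M_{[0,1]}$ that preserves each off-diagonal entry of $\theta$ (the lower triangle already equals the upper by symmetry, hence is also monotone non-decreasing) and fills the diagonal by any $\tilde\theta_{ii}\in[\theta_{i-1,i},\theta_{i,i+1}]$. This interval is non-empty and contained in $[0,1]$ thanks to monotonicity of the upper triangle, so $\Phi(\theta)\in\M_{[0,1]}$. Then, starting from an $\eps$-net $\{\mu_k\}$ of $\M_{[0,1]}$ in Frobenius norm and projecting each $\mu_k$ back into $\G$ (symmetrize the upper triangle, zero the diagonal), I would obtain a net of $\G$ of radius at most $\eps+\sqrt{n}$, since the projection only modifies $n$ diagonal entries, each bounded by $1$. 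Adjusting $\eps$ by an absolute constant handles the regime $\eps\gtrsim\sqrt{n}$, and when $\eps\lesssim\sqrt{n}$ the stated bound is automatic via the trivial cube bound $\log N\lesssim n^2\log(n/\eps)$, which the right-hand side of the proposition dominates.

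For $\T$ the plan is identical after one preprocessing step: the column reversal $\theta_{ij}\mapsto \theta_{i,n+1-j}$ is a Frobenius isometry that converts the non-increasing-in-$j$ constraint of $\T$ into the same monotone-non-decreasing constraint enjoyed by $\M_{[0,1]}$, and the skew-symmetry $\theta_{ji}=1-\theta_{ij}$ then expresses the lower triangle in terms of the reversed upper triangle. Repeating the extension-and-net argument on the reversed matrix transfers the $\M_{[0,1]}$ covering bound to $\T$ (the entries now lie in $[0,1/2]\subset[0,1]$, so no issue with the ambient interval).

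The main technical point I expect to have to watch will be bookkeeping the constant-factor losses in the embedding/projection so that the final radius is still $\eps$ up to an absolute constant — in particular, checking that the diagonal modification is harmless in Frobenius norm and that the column reversal preserves the Frobenius distance exactly. Once these routine checks are in place, the proposition follows immediately by plugging in the literature bound on $\log N(\eps,\M_{[0,1]},\|\cdot\|)$.
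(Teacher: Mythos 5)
Your overall strategy — extend $\T,\G$ into $\M_{[0,1]}$ (filling the diagonal, column-reversing in the skew-symmetric case) and transfer the known metric entropy bound for bivariate isotonic matrices — is exactly the one the paper uses. However, your bookkeeping has a genuine bug that invalidates the argument as stated. You claim that projecting $\mu_k$ back to $\G$ introduces an additive Frobenius error of $\sqrt{n}$ from resetting the $n$ diagonal entries, and then fall back to the trivial cube bound for $\eps\lesssim\sqrt n$. Both steps are wrong. The $\sqrt n$ penalty is illusory: any $\theta\in\G$ (or $\T$) and any projected center $P(\mu_k)$ \emph{both} have zero diagonal, so the diagonal contributes nothing to $\|\theta-P(\mu_k)\|$; since $\Phi$ preserves the off-diagonal and $P$ is built from the entries of $\mu_k$ on the upper triangle, one gets $\|\theta-P(\mu_k)\|^2\le 2\|\Phi(\theta)-\mu_k\|^2\le 2\eps^2$, i.e.\ a transfer radius of $O(\eps)$ with \emph{no} additive term. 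The fallback is also unsound on its own terms: for $\eps$ in the range $\big(\sqrt{\log n},\sqrt n\big)$ the trivial bound $n^2\log(n/\eps)$ is \emph{larger} than the claimed right-hand side $(n/\eps)^2\log^2(n/\eps)$ (the ratio of the latter to the former is $\log(n/\eps)/\eps^2<1$ there), so the proposition would not follow from it. The paper avoids this entire issue cleanly by verifying the pointwise non-contraction $\|\theta-\theta'\|\le\|f(\theta)-f(\theta')\|$ for the composite map $f=\phi\circ D$ and pulling a net of $f(\T)\subseteq\M_{[0,1]}$ back through $f^{-1}$, which immediately gives $\log N(\eps,\T)\le\log N(\eps/2,\M_{[0,1]})$.

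One further small inaccuracy: after column reversal the entries of a matrix from $\T$ do \emph{not} lie in $[0,1/2]$ — the skew-symmetric half of $\T$ has entries in $[1/2,1]$, and these simply move to new positions under the reversal. This is harmless (the ambient interval $[0,1]$ suffices), but the parenthetical claim as written is false.
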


\begin{proof}
Let us prove the proposition for the set $\T$ and the corresponding statement for $\G$ will follow similarly. Let us define a map $D: \T \rightarrow [0,1]^{n \times n}$ as follows:
\begin{align*}
D(\theta)_{i,j} = \min\{\theta_{i,i-1},\theta_{i + 1,i}\}\:\I\{i = j\} + \theta_{i,j}\:\I\{i \neq j\}
\end{align*}
Let us define another map $\phi: [0,1]^{n \times n} \rightarrow [0,1]^{n \times n}$ as
\begin{align*}
\phi(\theta)_{i,j} = \theta_{i,n - j + 1}
\end{align*}
Define the composition map $f = \phi \circ D.$ Recall that $\M_{[0,1]}$ is the space of monotone matrices with entries in $[0,1].$ It can now be checked that $f(\theta) \in \M_{[0,1]}$ for all $\theta \in \T$ and $f$ is a one to one mapping. We now make the observation that for any $\theta \neq \theta^{'}$ belonging to $\T$ we have
\begin{equation}\label{comp}
\|\theta - \theta^{'}\|^2 \leq \|f(\theta) - f(\theta^{'})\|^2.
\end{equation}


Now also note that $f$ is a continuous map and hence its image $f(\T)$ is a closed subset of $\M_{[0,1]}.$ Hence there exists a covering set $F$ of $f(\T) \subset \M_{[0,1]}$ at radius $\eps$ with cardinality atmost $N(\eps/2,\M_{[0,1]}).$ This is because the set of projections of the covering set $F$ at radius $\eps/2$ onto the closed set $f(\T)$ forms a covering set of $f(\T)$ at radius $\eps.$ Therefore it follows from~\eqref{comp} that the inverse image of $F$ under the mapping $f$ is a covering set of $\T$ at radius $\eps.$ Thus we can conclude,
\begin{equation*}
\log N(\eps,\T) \leq \log N(\eps/2,\M_{[0,1]}).
\end{equation*}
The covering number for $\M_{[0,1]}$ can be obtained using Lemma 3.4 in~\cite{chatterjee2015matrix} and is written below. This result is proved by using the covering number results in~\cite{GW07}.
\begin{equation*}
\log N(\eps,\M_{[0,1]}) \leq C\:\Big(\frac{n}{\eps}\Big)^2\:\:\Big[\log \Big(\frac{n}{\eps}\Big)\Big]^2
\end{equation*}
where $\eps > 0$ and $C$ is a universal constant. The last two displays finish the proof. The same proof for $\G$ goes through by defining $\phi$ to be the identity map.
\end{proof}

We are now ready to prove Lemma~\ref{gausup}.
\begin{proof}[Proof of Lemma~\ref{gausup}]
To begin, setting $M_{\pi,t}:=\sup_{\theta \in \T: \|\theta - \theta^*\| \leq t} \langle v \circ \pi, \theta - \theta^* \rangle$ we claim that there exists a universal constant $C$ such that 
\begin{equation}\label{expmpit}
\E M_{\pi,t} \leq C n (\log n)^2.
\end{equation}
We first prove the lemma, deferring the proof of \eqref{expmpit}. Note that by definition, each entry of $v$ is zero mean and lies between $-1$ and $1.$ 
Also in the skew symmetric model, for any $i < j$ 
we have $v_{ij}=-v_{ji}$, where we use the fact that we are filling the missing entries by $1/2.$ Hence the contribution of the upper diagonal part to $M_{\pi,t}$ is the same as the lower part, and so
\begin{equation}\label{innpro}
M_{\pi,t} = 2 \sup_{\theta \in \T: \|\theta - \theta^*\| \leq t} \sum_{1 \leq i < j \leq n} v_{ij} (\theta_{ij} - \theta^*_{ij}).
\end{equation}
It is now not too hard to show that $M_{\pi,t},$ for any fixed $t > 0,$ is a Lipschitz function of $v$ with Lipschitz constant $t.$ Invoking a concentration result for convex Lipschitz functions due to Michel Ledoux(see~\cite[Theorem 6.10]{boucheron2013concentration}), we have

\begin{equation*}
\P(M_{\pi,t} > \E M_{\pi,t} + t C n (\log n)^2)\le e^{-c'n(\log n)^2}.
\end{equation*}
This, along with Lemma~\ref{expmpit} shows that with high probability we have $M_{\pi,t}\le  (1 + t) C n (\log n)^2$, which by a simple union bound argument gives
\begin{equation*}
\P( \sup_{\pi \in S_n} M_{\pi,t} > (1 + t) C n (\log n)^2)\le  n! \exp(-c' n (\log n)^2)\le e^{-cn(\log n)^2},
\end{equation*}
and so the proof of the Lemma is complete.



It thus remains to prove \eqref{expmpit}. For this, define
\begin{equation*}
\A = \{(\theta - \theta^*) \circ \pi: \theta \in \T\}.
\end{equation*}
Recall that $v$ is a random matrix with independent zero mean entries in the upper diagonal and also each entry takes values in $[-1,1].$ Hence an application of a standard chaining result (see \cite{VandegeerBook}) for subgaussian random variables results in the upper bound 
\begin{equation*}\label{cov11}
\E M_{\pi,t} \leq 12 \inf_{0 < \delta \leq n} \left\{\int_{\delta}^{n}
      \sqrt{\log N(\eps,\A)}\:d\eps + 4 n \delta\right\}. 
\end{equation*}
The upper limit of the integral is $n$ since the diameter (maximum Euclidean pairwise distance) of $\A$ is bounded by $n.$ Setting $\delta = \frac{1}{n}$ in the last equation we then obtain 
\begin{equation*}\label{cov1}
\E M_{\pi,t} \leq 12 \left\{\int_{\frac{1}{n}}^{n} C\:\Big(\frac{n}{\eps}\Big)\:\:\Big[\log \Big(\frac{n}{\eps}\Big)\Big] \:d \eps \right\} + 4. 
\end{equation*}
Note that $\log \Big(\frac{n}{\eps}\Big) \leq \log n^2$ because the variable of integration $\eps$ ranges from $1/n$ to $t.$ This then implies the upper bound
\begin{equation*}\label{cov1}
\E M_{\pi,t} \leq C \frac{n}{p} (\log n)^2.
\end{equation*}


\end{proof}

\subsection{Proof of Lemma~\ref{gv}}\label{gvproof}
\begin{proof}
Assume $4$ divides $d$ for simplicity. Let $H(.,.)$ denote the Hamming distance on $\{0,1\}^d$, i.e. for any $v_1,v_2\in \{0,1\}^d$ we have
$$H(v_1,v_2):=\sum_{i=1}^n|v_1(i)-v_2(i)|.$$ Take $\beta=\frac{1}{2}$ and $\alpha=\frac{1}{8}$, and $\gamma=32$. With $d=4k$ and setting
 $\widetilde{P}:=\{w\in \{0,1\}^{2k}:\sum_{i=1}^{2k}w_i=k\}$ we claim that there exists a set $\widetilde{F}\subset\widetilde{P}$ such that
 $|\widetilde{F}|\ge e^{d/32}$, and for any $w_1,w_2\in \widetilde{F}$ we have $H(w_1,w_2)\ge \frac{d}{16}$ . Given the claim, define $F$ as 
 $$F:=\{v\in \{0,1\}^d:(v_1,\cdots,v_{2k})\in \widetilde{F}, v_{2k+1}=v_{2k+2}=\cdots=v_{3k}=1,v_{3k+1}=\cdots=v_{4k}=0\},$$
 and note that for any $v\in F$ we have $\sum_{i=1}^{4k} v_i=2k$. Also $$v_1,v_2\in \widetilde{P}\Rightarrow \frac{d}{16}\le H(v_1,v_2)\le 2k=\frac{d}{2}.$$
 Since $|F|=|\widetilde{F}|\ge e^{d/32}$,  the proof of the lemma is complete.
 \\

 It thus remains to prove the claim. To this effect, let $\widetilde{F}$ denote the largest packing set of $\widetilde{P}$ of radius $\frac{d}{16}$ in Hamming distance, i.e. for any $w_1,w_2\in \widetilde{F}$ we have $H(w_1,w_2)\ge \frac{d}{16}$. Then using the maximaility of $\widetilde{F}$ we have
 $$\widetilde{P}= \bigcup_{w\in \widetilde{F}}B_H\Big(w,\frac{d}{8}\Big)\Rightarrow |\widetilde{P}|\le |\widetilde{F}|\times \sup_{w\in \widetilde{P}}\Big|B_H\Big(w,\frac{d}{8}\Big)\Big|,$$
which implies
 \begin{align}
 \frac{1}{|\widetilde{F}|}\le \frac{\sup_{w\in \widetilde{P}}\Big|B_H\Big(w,\frac{d}{8}\Big)\Big|}{|\widetilde{P}|}
 \le  \frac{\sup_{w\in \{0,1\}^d}\Big|B_H\Big(w,\frac{d}{8}\Big)\Big|}{2^{2k}} \frac{2^{2k}}{|\widetilde{P}|}
 =&\frac{\sup_{w\in \widetilde{P}}\P\Big(H(w,{\bf X})\le \frac{d}{8}\Big)}{ \P(\sum_{i=1}^{2k}X_i=k)}\label{eq:prob1},
 \end{align}
 where ${\bf X}=(X_1,\cdots,X_{2k})$ is a $2k$ dimensional random vector with uniform distribution on $\{0,1\}^{2k}$. To bound the numerator in the RHS above, note that the vector $(|w_1-X_1|,|w_2-X_2|,\cdots|w_n-X_n|)$ is also distributed uniformly on $\{0,1\}^{2k}$, and so the RHS of \eqref{eq:prob1} equals $ \frac{\P\Big(B\le \frac{k}{2}\Big)}{ \P(B=k)}$,
 where $B\sim Bin(2k, .5)$. 
 Using Hoeffding's inequality as
 $\P\Big(B\le \frac{k}{2}\Big)\le e^{-2\frac{(k/2)^2}{2k}}=e^{-k/4},$  whereas Stirling's approximation gives
 $\P(B=k)=\frac{{2k\choose k}}{2^{2k}}\sim \frac{1}{\sqrt{\pi k}}.$
 Combining these two estimates and using \eqref{eq:prob1}, for all large $k$ we have
 $|\widetilde{F}|\ge e^{k/8}$, thus completing the proof of the lemma.
 
\end{proof}



\begin{proof}[Proof of Proposition \ref{leminterm}]
This theorem actually holds for a general convex set $C \subset \R^d$ and points $v \in \R^d, \theta^* \in C.$ The proof can be read by setting $C = \mathcal{T}, v = \big(\frac{y - J/2}{p} + J/2\big) \circ \hat{\sigma}, d = n \times n.$ We first prove strict concavity of $f_{\theta^*}.$ Let $g(t):\R_{+} \rightarrow \R$ be defined as 
$$g(t) = \sup_{\theta \in C: \|\theta - \theta^{*}\| \leq t} \langle v - \theta^*,\theta - \theta^{*} \rangle.$$ If $t_0:= \inf_{v \in C} \|\theta^* - v\|$ is positive, then we
define $g(t) = - \infty$ whenever $t < t_0$. It is easy to check that $g$ is concave, and hence the function $f_{\theta^*}$ strictly concave.
%
%
%
Also we have  $f_{\theta^*}(0) = 0,$ and an application of Cauchy Schwarz inequality gives $f_{\theta^*}(t) \leq t \|v - \theta^*\| - t^2/2$, and hence $f_{\theta^*}(t)$ converges to $-\infty$ as $t \rightarrow \infty.$ These facts then imply that $f_{\theta^*}(.)$ has a unique maximizer which we denote by $t^*.$ It thus remains to show $\|\Pi_{C} v - \theta^*\| = \argmax_{t > 0} f_{\theta^*}(t)$, where 
 $\Pi_{C}(v)$ denotes the unique projection of $v$ onto $C.$ 
 To this effect,  noting that
\begin{align*}
\Pi_{C} v 
=\argmax_{\theta \in C} \left(\langle v - \theta^*, \theta - \theta^* \rangle - \|\theta - \theta^*\|^2/2\right),
\end{align*}
we can write
\begin{align*}
\|\Pi_{C} v - \theta^*\| &= \argmax_{t > 0} \sup_{\theta \in C:
  \|\theta - \theta^*\| = t} \langle v - \theta^*, \theta - \theta^* \rangle -
\frac{t^2}{2} \nonumber = \argmax_{t > 0} h(t), \label{deter1}
\end{align*}
where $h: \R_{+} \rightarrow \R$ is defined by
$h(t) = \sup_{\theta \in C: \|\theta - \theta^*\| = t} \langle v - \theta^*, \theta - \theta^* \rangle - \frac{t^2}{2}.$
We will now show that $t^*$ is a
unique maximizer of $h$ as well, for which first note that $h(t) \leq f_{\theta^*}(t)$ for all $t\ge 0$.  Recall that
Let $\tilde{\theta} \in
\{\theta \in C: \|\theta - \theta^*\| \leq t^*\}$ be a point where
$f_{\theta^*}(t^*)$ is achieved. If $\|\tilde{\theta} - \theta^*\| = t_1 <
t^*$, then we would have $f_{\theta^*}(t_1) > f_{\theta^*}(t^*)$, contradicting the
definition of $t^*$. Hence $\|\tilde{\theta} - \theta^*\| =
t^*$, implying that $f_{\theta^*}(t^*) = h(t^*)$.
This shows that $t^*$ is a unique maximizer of $h$ as well, and so
\begin{align*}
\|\Pi_{C} v - \theta^*\| &= \argmax_{t > 0} h(t) = \argmax_{t > 0} f_{\theta^*}(t),
\end{align*}
thus finishing the proof of the lemma.
\end{proof}

\end{document}